\newcommand{\R}{\mathbb{R}}
\newcommand{\N}{\mathbb{N}}
\newcommand{\EE}{\mathbb{E}}
\newcommand{\cL}{\mathcal{L}}
\renewcommand{\P}{\mathbb{P}}
\newcommand{\cD}{\mathcal{D}}
\newcommand{\cT}{\mathcal{T}}
\newcommand{\cR}{\mathcal{R}}
\newcommand{\eee}{\mathrm{e}}
\newcommand{\Wt}{\widetilde{\mathcal{W}}}
\newcommand{\1}{\mathbf{1}}
\newcommand{\h}{\widetilde{h}}
\newcommand{\W}{\mathcal{W}}
\newcommand{\ft}{\widetilde{f}}
\newcommand{\dx}{\,\mathrm{d}x}
\newcommand{\dy}{\,\mathrm{d}y}
\def\d{\mathrm{d}}
\def\e{\mathrm{e}} 
\def\N{\mathbb{N}}
\def\R{\mathbb{R}}
\def\dfrac#1#2{\lower0.15ex\hbox{\large$\frac{#1}{#2}$}}
\numberwithin{equation}{section}
 \newcounter{enunciato}[section]
 \newtheorem{ittheorem}{Theorem}
 \newtheorem{itlemma}{Lemma}
 \newtheorem{itproposition}{Proposition}
 \newtheorem{itdefinition}{Definition}
 \newtheorem{itremark}{Remark}
 \newtheorem{itclaim}{Claim}
 \newtheorem{itfact}{Fact}
 \newtheorem{itexample}{Example}
 \newtheorem{itconjecture}{Conjecture}
 \newtheorem{itobservation}{Observation} 
 \newtheorem{itcorollary}{Corollary} 
 \newtheorem{itquestion}{Question} 
 \newtheorem{itworkinghypothesis}{Working hypothesis}
 \newenvironment{theorem}{\addtocounter{enunciato}{1}
 \begin{ittheorem}}{\end{ittheorem}}
 \newenvironment{lemma}{\addtocounter{enunciato}{1}
 \begin{itlemma}}{\end{itlemma}}
 \newenvironment{proposition}{\addtocounter{enunciato}{1}
 \begin{itproposition}}{\end{itproposition}}
 \newenvironment{definition}{\addtocounter{enunciato}{1}
 \begin{itdefinition}}{\end{itdefinition}}
 \newenvironment{remark}{\addtocounter{enunciato}{1}
 \begin{itremark}}{\end{itremark}}
 \newenvironment{claim}{\addtocounter{enunciato}{1}
 \begin{itclaim}}{\end{itclaim}}
 \newenvironment{fact}{\addtocounter{enunciato}{1}
 \begin{itfact}}{\end{itfact}}
 \newenvironment{conjecture}{\addtocounter{enunciato}{1}
 \begin{itconjecture}}{\end{itconjecture}}
 \newenvironment{corollary}{\addtocounter{enunciato}{1}
 \begin{itcorollary}}{\end{itcorollary}}
 \newcommand{\be}[1]{\begin{equation}\label{#1}}
 \newcommand{\ee}{\end{equation}}
 \newcommand{\bl}[1]{\begin{lemma}\label{#1}}
 \newcommand{\el}{\end{lemma}}
 \newcommand{\br}[1]{\begin{remark}\label{#1}}
 \newcommand{\er}{\end{remark}}
 \newcommand{\bt}[1]{\begin{theorem}\label{#1}}
 \newcommand{\et}{\end{theorem}}
 \newcommand{\bd}[1]{\begin{definition}\label{#1}}
 \newcommand{\ed}{\end{definition}}
 \newcommand{\bcl}[1]{\begin{claim}\label{#1}}
 \newcommand{\ecl}{\end{claim}}
 \newcommand{\bfact}[1]{\begin{fact}\label{#1}}
 \newcommand{\efact}{\end{fact}}
 \newcommand{\bp}[1]{\begin{proposition}\label{#1}}
 \newcommand{\ep}{\end{proposition}}
 \newcommand{\bc}[1]{\begin{corollary}\label{#1}}
 \newcommand{\ec}{\end{corollary}}
 \newcommand{\bcj}[1]{\begin{conjecture}\label{#1}}
 \newcommand{\ecj}{\end{conjecture}}
 \newcommand{\bpr}{\begin{proof}}
 \newcommand{\epr}{\end{proof}}
 \newcommand{\bprl}[1]{\begin{proofof}{\it\ref{#1}}.\,\,}
 \newcommand{\eprl}{\end{proofof}}
 \newcommand{\bi}{\begin{itemize}}
 \newcommand{\ei}{\end{itemize}}
 \newcommand{\ben}{\begin{enumerate}}
 \newcommand{\een}{\end{enumerate}}
\begin{document}

\title[LDP Laplacian]{Large deviation principle for the norm of the Laplacian 
matrix of inhomogeneous Erd\H{o}s-R\'enyi random graphs\thanks{The research in this paper was supported through NWO Gravitation Grant NETWORKS 024.002.003.}}
     
\author[R.~S.~Hazra]{Rajat Subhra Hazra}
\author[F.\ den Hollander]{Frank den Hollander}
\author[M.\ Markering]{Maarten Markering}
\address{Mathematisch instituut, Universiteit Leiden, The Netherlands}
\email{r.s.hazra@math.leidenuniv.nl}
\email{denholla@math.leidenuniv.nl}
\address{DPMMS, University of Cambridge, United Kingdom}
\email{mjrm2@cam.ac.uk}
\keywords{Inhomogeneous Erd\H{o}s-R\'enyi random graph, Laplacian matrix, largest eigenvalue, graphons, large deviation principle, rate function.}
\subjclass[2000]{05C80, 60B20, 60C05, 60F10}

\begin{abstract}
We consider an inhomogeneous Erd\H{o}s-R\'enyi random graph $G_N$ with vertex set $[N] = \{1,\dots,N\}$ for which the pair of vertices $i,j \in [N]$, $i\neq j$, is connected by an edge with probability $r_N(\tfrac{i}{N},\tfrac{j}{N})$, independently of other pairs of vertices. Here, $r_N\colon\,[0,1]^2 \to (0,1)$ is a symmetric function that plays the role of a reference graphon. Let $\lambda_N$ be the maximal eigenvalue of the Laplacian matrix of $G_N$. We show that if $\lim_{N\to\infty} \|r_N-r\|_\infty = 0$ for some limiting graphon $r\colon\,[0,1]^2 \to (0,1)$, then $\lambda_N/N$ satisfies a downward LDP with rate $\binom{N}{2}$ and an upward LDP with rate $N$. We identify the associated rate functions $\psi_r$ and $\widehat{\psi}_r$, and derive their basic properties.
\end{abstract}
\maketitle
\newcommand{\ABS}[1]{\left(#1\right)} % example of author macro
\newcommand{\veps}{\varepsilon} % another example of author macro

%%%%% commands from the paper

%%%%%%%%%%%% SECTION 1 %%%%%%%%%%%%%%%%%%%%%%%%%%%%%%%

\section{Introduction and main results}

Section~\ref{subsec:background} provides background. Section~\ref{subsec:setting} states the LDP for the empirical graphon associated with inhomogeneous Erd\H{o}s-R\'enyi random graphs. Section~\ref{subsec:grop} looks at graphon operators, in particular, the Laplacian operator that is the central object in the present paper. Sections~\ref{subsec:LDPdown}--\ref{subsec:LDPup} state the downward, respectively, upward LDP for the largest eigenvalue of the Laplacian matrix and present basic properties of the associated rate functions. Section~\ref{subsec:disc} places the various theorems in their proper context.    

%%%

\subsection{Background}
\label{subsec:background}

Spectra of matrices associated with a graph play a crucial role in understanding the geometry of the graph. Given a finite graph on $N$ vertices, two important matrices are the \emph{adjacency matrix} $A_N$ and the \emph{Laplacian matrix} $L_N= D_N- A_N$, where $D_N$ is the diagonal matrix whose elements are the degrees of the vertices. In this paper we focus on the largest eigenvalue of $L_N$ when the underlying graph is a \emph{dense inhomogeneous} Erd\H{o}s-R\'enyi random graph. The largest eigenvalue of $A_N$ satisfies a large deviation principle (LDP). This fact is an immediate consequence of the LDP for the empirical graphon derived in \cite{DS19} and \cite{M20} in combination with the contraction principle, because the norm of the adjacency graphon operator is bounded and continuous on the space of graphons. The rate function is given in terms of a variational formula involving the rate function of the LDP for the empirical graphon. In \cite{CHHS20} we analysed this variational formula, identified the basic properties of the rate function, and identified its scaling behaviour near its unique minimiser and its two boundary points. 

The extension of the LDP to $L_N$ poses new challenges, because $L_N$ is a more delicate object than $A_N$. For one, the upward and the downward large deviations for the largest eigenvalue of $L_N$ live on \emph{different scales}, and the norm of the Laplacian graphon operator lacks certain continuities properties that hold for the norm of the adjacency graphon operator (see Remark \ref{rem:continuity}). Like for $A_N$, it is not possible to explicitly solve the variational formulas for the two associated rate functions. Nonetheless, we derive their basic properties and identify their scaling behaviour near their unique minimisers.
Our proofs require an analysis of the functional analytic properties of the Laplacian operator, in combination with variational tools. 

The literature on the eigenvalues of the Laplacian of a random matrix is limited. For a general Wigner matrix with independent centered entries satisfying certain moment conditions, the empirical spectral distribution was derived in \cite{bryc} and was identified as the free additive convolution of the semicircle law and the Gaussian distribution. For entries with a common mean $m$ it was shown that the empirical spectral distribution converges to the Dirac measure at $m$. Moreover, under a fourth moment condition it was shown that the spectral norm is of order $\sqrt{2N\log N}$ when the entries are centered, and of order $N$ when the entries are not centered. Although the moment conditions were rather general, \cite{bryc} did not cover all relevant cases of Erd\H{o}s-R\'enyi random graphs. This was resolved in \cite{Jiang2012}, \cite{DingJiang} for the case where the average degree diverges with $N$. The results were extended to inhomogeneous Erd\H{o}s-R\'enyi graphs in \cite{chakrabarty2019}, \cite{chatterjee-hazra}. The empirical distribution in the sparse homogeneous Erd\H{o}s-R\'enyi graph was identified in \cite{Khorunzhyetal}. Later, an alternative proof through local weak convergence was provided in \cite{BordenaveLelarge}. The results on extreme eigenvalues are limited and challenging. Recently, it was shown in \cite{campbell2022extreme} that for the Gaussian orthogonal ensemble the largest eigenvalue of the Laplacian, after appropriate scaling and centering, converges to the Gumbel distribution. 

Large deviations for Erd\H{o}s-R\'enyi random graphs were explored extensively in various works, including \cite{chatterjeeln2017}, \cite{CV11}, \cite{LZ16} by utilising graphon theory, specifically subgraph densities and maximal eigenvalues. For a comprehensive review of the literature, we refer to \cite{chatterjeeln2017}. Large deviation theory for random matrices originated in \cite{arous:guionnet}, where the focus was on the empirical spectral distribution of $\beta$-ensembles with a quadratic potential. The rate was found to be $N^2$, and the rate function was identified by using a non-commutative notion of entropy. Subsequently, the maximal eigenvalue of such ensembles was investigated in \cite{arous:guionnet:dembo}. In \cite{bordenavecaputo2014}, large deviations of the empirical spectral distribution were derived for random matrices with non-Gaussian tails, while the study of the maximal eigenvalue in that context was carried out in \cite{augeri2016}, \cite{augeriguionnethusson2019}. However, the adjacency matrix and the Laplacian matrix of an inhomogeneous Erd\H{o}s-R\'enyi random graph fall outside these regimes. The goal of the present paper is to understand the \emph{large deviations of the largest eigenvalue} of an \emph{inhomogeneous Erd\H{o}s-R\'enyi graph} under certain density assumptions.

%%%

\subsection{LDP for inhomogeneous Erd\H{o}s-R\'enyi random graphs}
\label{subsec:setting}

%%%

\subsubsection{Graphons} 

Let
\begin{equation}
\mathcal{W} = \bigl\{h\colon\, [0,1]^2 \to [0,1]\colon\,h(x,y) = h(y,x)\,\, \forall \, x,y \in [0,1] \bigr\}
\end{equation}
denote the set of graphons. Let $\mathcal{M}$ denote the set of Lebesgue measure-preserving bijective maps $\phi\colon\, [0,1] \mapsto [0,1]$. For two graphons $h_1, h_2 \in \mathcal{W}$, the \emph{cut-distance} is defined by 
\begin{equation}
\label{cutdistance}
d_{\square}(h_1, h_2) = \sup_{S,T \subset [0,1]} \bigg| \int_{S \times T}\d x \, \d y \, \big[h_1(x,y) - h_2(x,y) \big]  \bigg|,
\end{equation}
and the \emph{cut-metric} by
\begin{equation}
\delta_{\square}(h_1, h_2) = \inf_{\phi \in \mathcal{M}} d_{\square}(h_1, h_2^{\phi}),
\end{equation}
where $h_2^{\phi}(x,y) = h_2(\phi(x), \phi(y))$. The cut-metric defines an equivalence relation $\sim$ on $\mathcal{W}$ by declaring $h_1 \sim h_2$ if and only if $\delta_{\square}(h_1, h_2)= 0$, and leads to the quotient space $\widetilde{\mathcal{W}} = \mathcal{W}/_\sim$. For $h \in \mathcal{W}$, we write $\widetilde{h}$ to denote the equivalence class of $h$ in $\widetilde{\mathcal{W}}$. The equivalence classes correspond to relabelings of the vertices of the graph. The pair $(\widetilde{\mathcal{W}}, \delta_{\square})$ is a compact metric space \cite{L12}. 

%%%

\subsubsection{LDP for empirical graphon} 

Consider a graphon $r\in\mathcal{W}$ that plays the role of a \emph{reference graphon} and satisfies
\begin{equation}
\label{Assbasic} 
\log r, \, \log(1-r) \in L^1([0,1]^2).
\end{equation}
Consider further a sequence of graphons $r_N\colon[0,1]^2\to(0,1)$, $N \in \N$, that are constant on the blocks $[\frac{i-1}{N},\frac{i}{N})\times[\frac{j-1}{N},\frac{j}{N})$, $1\leq i,j\leq N$, and satisfy
\begin{equation}
\label{Assbasic2}
\begin{split}
&\lim_{N\to\infty} r_N=r\quad\text{a.e.},\\
&\lim_{N\to\infty} \|r_N - r\|_1 = 0,\\
&\lim_{N\to\infty} \|\log r _ N-\log r\|_1 = 0,\\
&\lim_{N\to\infty} \|\log(1-r_N) - \log(1-r)\|_1 = 0.
\end{split}
\end{equation}
Let $G_N$ be the inhomogeneous Erd\H{o}s-R\'enyi random graph with vertex set given by $[N] = \{1,\dots,N\}$ for which the pair of vertices $i,j \in [N]$, $i\neq j$, is connected by an edge with probability $(r_N)_{ij}$, independently of other pairs of vertices, where $(r_N)_{ij}$ is the value of $r_N$ on the block $[\frac{i-1}{N},\frac{i}{N})\times[\frac{j-1}{N},\frac{j}{N})$. The function $r_N$ plays the role of a \emph{block reference graphon} converging to some \emph{reference graphon} $r$ as $N\to\infty$.

Let $A_N$ be the \emph{adjacency matrix} of $G_N$ defined by
\begin{equation}
A_N(i,j)
= \left\{\begin{array}{ll}
1, &\text{if there is an edge between vertex $i$ and vertex $j$},\\
0, &\text{otherwise}. 
\end{array}
\right.
\end{equation} 
Let $D_N$ be the diagonal degree matrix defined by $D_N(i,i) = \sum_{j \in [N] \setminus \{i\}} A_N(i,j)$ and $D_N(i,j) = 0$ for $i \neq j$, and put $L_N = D_N - A_N$, which is the \emph{Laplacian matrix}. Write $\mathbb{P}_N$ to denote the law of $G_N$. Use the same symbol for the law on $\mathcal{W}$ induced by the map that associates with the graph $G_N$ its empirical graphon $h^{G_N}$, defined by
\begin{equation}
h^{G_N}(x,y)
= \left\{\begin{array}{ll}
1, &\text{if there is an edge between vertex $\lceil Nx\rceil$ and vertex $\lceil Ny\rceil$},\\
0, &\text{otherwise}. 
\end{array}
\right. 
\end{equation}
Write $\widetilde{\mathbb{P}}_N$ to denote the law of $\widetilde{h}^{G_N}$.

The following LDP, which is an extension of the celebrated LDP for homogeneous ERRG derived in \cite{CV11}, is proved in \cite{DS19} and \cite{M20}. (For more background on large deviation theory, see, for instance, \cite{H00}.) 

\begin{theorem}{\bf [LDP for inhomogeneous ERRG, \cite{DS19}, \cite{M20}]}
\label{thm:LDPinhom}
Subject to \eqref{Assbasic}--\eqref{Assbasic2}, the sequence $(\widetilde{\mathbb{P}}_N)_{N\in\mathbb{N}}$ satisfies the large deviation principle on $(\widetilde{\mathcal{W}},\delta_{\square})$ with rate $\binom{N}{2}$, i.e., 
\begin{equation}
\begin{aligned}
\limsup_{N \to \infty} \binom{N}{2}^{-1} \log \widetilde{\mathbb{P}}_N (\mathcal{C}) 
&\leq - \inf_{\widetilde h \in \mathcal{C}} J_r(\widetilde h)
&\forall\,\mathcal{C} \subset \widetilde{\mathcal{W}} \text{ closed},\\ 
\liminf_{N \to \infty} \binom{N}{2}^{-1} \log \widetilde{\mathbb{P}}_N(\mathcal{O}) 
&\geq - \inf_{\widetilde h \in \mathcal{O}} J_r(\widetilde h)
&\forall\,\mathcal{O} \subset \widetilde{\mathcal{W}} \text{ open},
\end{aligned}
\end{equation}
where the rate function $J_r\colon\,\widetilde{\mathcal{W}} \to \mathbb{R}$ is given by
\begin{equation}
\label{Jrhdef}
J_r(\widetilde h) = \inf_{\phi \in \mathcal{M}} I_r(h^\phi),
\end{equation}
where $h$ is any representative of $\widetilde h$ and 
\begin{equation}
\label{Irhdef}
I_r(h) = \int_{[0,1]^2} \d x\, \d y \,\, \mathcal{R}\big(h(x,y) \mid r(x,y)\big), \quad h \in \mathcal{W},
\end{equation}
with
\begin{equation}
\label{Rdef}
\mathcal{R}\big(a \mid b\big) = a \log \tfrac{a}{b} + (1-a) \log \tfrac{1-a}{1-b}
\end{equation}
the relative entropy of two Bernoulli distributions with success probabilities $a \in [0,1]$, $b \in (0,1)$ (with the convention $0 \log 0 = 0$).
\end{theorem}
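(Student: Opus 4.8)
The plan is to adapt the strategy of Chatterjee and Varadhan \cite{CV11} for the homogeneous ERRG to the inhomogeneous reference graphon $r_N$, taking care that $r$ need only satisfy \eqref{Assbasic} and so may touch the boundary values $0$ and $1$. The route has three stages: a finite-dimensional LDP for the block densities of $G_N$; an upper bound obtained via the weak regularity lemma together with the compactness of $(\widetilde{\mathcal{W}},\delta_\square)$; and a matching lower bound. For the first stage, fix $n\in\N$, split $[N]$ into $n$ (almost) equal consecutive blocks $B_1,\dots,B_n$, and for $a\le b$ let $Y_{ab}$ count the edges of $G_N$ between $B_a$ and $B_b$ (or inside $B_a$ when $a=b$). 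The $Y_{ab}$ are independent sums of Bernoulli variables with parameters $(r_N)_{ij}$, so a Cram\'er-type computation shows that the vector of block densities $\big(Y_{ab}/|B_a\times B_b|\big)_{a\le b}$ satisfies an LDP on a finite-dimensional cube with speed $\binom N2$ and rate function $u\mapsto\tfrac1{n^2}\sum_{a,b}\mathcal{R}\big(u_{ab}\mid\bar r^{(n)}_{ab}\big)$, where $\bar r^{(n)}_{ab}$ is the average of $r$ over the $(a,b)$-block; the three $L^1$-convergences in \eqref{Assbasic2} (of $r_N$, $\log r_N$, $\log(1-r_N)$) are exactly what is needed to replace the parameters $(r_N)_{ij}$ by values of $r$ with only $o(N^2)$ errors in the exponent.

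For the upper bound, the weak regularity lemma supplies, for each $\veps>0$, an $n=n(\veps)$ such that every graphon is within cut-distance $\veps$ of its average $h^{(n)}$ against the $n$-block partition, and averaging against a fixed partition is a contraction for $d_\square$. If $\delta_\square(\widetilde h^{G_N},\widetilde h_0)<\veps$, then after relabelling the block densities of $G_N$ against \emph{some} partition of $[N]$ into $n$ parts are $O(\veps)$-close to those of $h_0^{(n)}$; a union bound over the at most $n^N$ such partitions costs only a factor $\e^{N\log n}$, which is $o(\e^{\delta\binom N2})$ for every $\delta>0$ and hence negligible. Applying the finite-dimensional LDP of the first stage to a fixed partition, then covering a closed set $\mathcal{C}$ by finitely many such balls using compactness, and finally letting $\veps\downarrow0$, produces the upper bound with rate function $J_r$ of \eqref{Jrhdef}--\eqref{Irhdef}.

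For the lower bound, let $\mathcal{O}$ be open, $\widetilde h_0\in\mathcal{O}$, and pick a representative $h_0$ with $I_r(h_0)$ arbitrarily close to $J_r(\widetilde h_0)$. Since $\delta_\square(h_0^{(n)},h_0)\to0$, for $n$ large the event $\{\delta_\square(\widetilde h^{G_N},\widetilde h_0)<\veps\}$ contains the event that the block densities of $G_N$ against the natural $n$-block partition are close to those of $h_0^{(n)}$, whose probability the finite-dimensional LDP bounds below by $\e^{-\binom N2[I_r(h_0^{(n)})+o(1)]}$. Sending $N\to\infty$, then $n\to\infty$ and $\veps\downarrow0$, and using $I_r(h_0^{(n)})\to I_r(h_0)$ — a consequence of Jensen's inequality together with \eqref{Assbasic} — gives the matching lower bound. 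Goodness and lower semicontinuity of $J_r$ are then automatic on the compact space $(\widetilde{\mathcal{W}},\delta_\square)$: lower semicontinuity is forced by the LDP bounds, and a lower semicontinuous function on a compact space is good.

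The main obstacle is the boundary behaviour of $r$. Because $r$ is not bounded away from $0$ and $1$, the relative-entropy density $\mathcal{R}(\cdot\mid r(x,y))$ is unbounded, $J_r$ is finite only on a proper subset of $\widetilde{\mathcal{W}}$, and the Cram\'er step must be run with care: one has to control the logarithmic moment generating functions of the $(r_N)_{ij}$-Bernoulli sums, which is governed by the $L^1$-convergence of $\log r_N$ and $\log(1-r_N)$ rather than by uniform bounds. This dependence on \eqref{Assbasic}--\eqref{Assbasic2} is also the reason the inhomogeneous LDP does not follow from the homogeneous one by a soft contraction argument.
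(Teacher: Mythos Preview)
The paper does not give a proof of Theorem~\ref{thm:LDPinhom}; it is quoted from \cite{DS19} and \cite{M20} and used as input for the remaining results. So there is no in-paper argument to compare against. Your three-stage outline (finite-dimensional block LDP, upper bound via weak regularity and compactness, lower bound via block approximation) is indeed the Chatterjee--Varadhan template that those references extend to the inhomogeneous setting, and your emphasis on where the hypotheses \eqref{Assbasic}--\eqref{Assbasic2} enter is appropriate.

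One genuine technical slip, however, would derail the argument as written. In the inhomogeneous case the block edge-count $Y_{ab}$ is a sum of independent Bernoulli variables with \emph{varying} parameters $r(x,y)$, and its Cram\'er rate function is \emph{not} $\mathcal R\big(u_{ab}\mid \bar r^{(n)}_{ab}\big)$: by concavity of $p\mapsto\log(1+p(\e^\theta-1))$ and Legendre duality, the true rate is
\[
\inf\Big\{\int_{\text{block}}\mathcal R\big(g(x,y)\mid r(x,y)\big)\,\d x\,\d y:\ \text{block average of }g=u_{ab}\Big\},
\]
which is in general \emph{strictly larger} than $\mathcal R(u_{ab}\mid\bar r^{(n)}_{ab})$. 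With your stated rate the finite-dimensional lower bound you invoke is stronger than what Cram\'er actually yields, so the lower bound step as written does not follow. Using the correct variational form above, the lower bound at the point $h_0^{(n)}$ gives cost at most $I_r(h_0)$ directly (take $g=h_0$), and no separate convergence $I_r(h_0^{(n)})\to I_r(h_0)$ is needed; conversely, for the upper bound the correct block rate is exactly what feeds into the infimum over graphons in each $\delta_\square$-ball. This is the place where the inhomogeneous proofs in \cite{DS19,M20} depart non-trivially from \cite{CV11}, and it is worth getting right.
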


\begin{remark}
{\rm Theorem~\ref{thm:LDPinhom} was proved in \cite{DS19} under the assumption that $r$ is bounded away from $0$ and $1$. In \cite{M20} this assumption was relaxed to \eqref{Assbasic}, and it was also shown that $J_r$ is a good rate function, i.e., $J_r \not\equiv \infty$ and $J_r$ has compact level sets. Note that \eqref{Jrhdef} differs from the expression in \cite{DS19}, where the rate function is the lower semi-continuous envelope of $I_r(h)$. However, as shown in \cite{M20}, under \eqref{Assbasic} the two rate functions are equivalent, since $J_r(\widetilde h)$ is lower semi-continuous on $\widetilde{\mathcal{W}}$.}\hfill$\spadesuit$
\end{remark} 

As an application of Theorem~\ref{thm:LDPinhom}, it was shown in \cite{CHHS20} that the largest eigenvalue of the adjacency matrix satisfies the LDP with rate ${N\choose 2}$. The rate function was analysed in detail for reference graphons that are rank-1. In the present paper we focus on the LDP for the largest eigenvalue of the Laplacian matrix $L_N$. 

%%%

\subsection{Graphon operators}
\label{subsec:grop}

For $h \in \mathcal{W}$, the graphon operator $\cT_h$ is the integral operator on $L^2([0,1])$ defined by 
\begin{equation}
(\cT_hu)(x)=\int_{[0,1]}\d y\,h(x,y)u(y), \qquad x \in [0,1].
\end{equation}
Note that $\cT_h$ is a compact operator. Define the \emph{degree function} as 
\begin{equation}
\label{degreefunction}
d_h(x)= \int_{[0,1]} \d y\,h(x,y), \qquad x \in [0,1].
\end{equation} 
The \emph{degree operator} $\cD_h$ is the multiplication operator on $L^2([0,1])$ defined by
\begin{equation}
\label{degreeoperator}
 (\cD_hu)(x) = d_h(x) u(x),
\end{equation}
The \emph{Laplacian operator} $\cL_h$ is the linear integral operator on $L^2([0,1])$ defined by
\begin{equation} 
\label{graphonoperator}
(\cL_h u)(x) = \int_{[0,1]} \d y \,h(x,y) [u(x)-u(y)], \qquad x \in [0,1].
\end{equation}
Note that 
\begin{equation}
\label{Laplaciandef}
\cL_h = \cD_h-\cT_h.
\end{equation}  

Recall that, given an operator $S$ on a Hilbert space, the \emph{spectrum} of $S$ is defined as
\begin{equation}
\sigma(S)= \{\lambda\in \mathbb C\colon\, S - \lambda I \text{ is not invertible}\}.
\end{equation} 
Let $\sigma_d(S)$ denote the \emph{discrete spectrum} of $S$, which consists of all the isolated eigenvalues with finite algebraic multiplicity. The \emph{essential spectrum} of $S$ is denoted by
\begin{equation}
\sigma_{\mathrm{ess}}(S) = \sigma(S)\setminus \sigma_d(S).
\end{equation}
The essential spectrum is closed, and the discrete spectrum can only have accumulation points on the boundary of the essential spectrum. Since it is known that compact operators do not affect the essential spectrum, we have
\begin{equation}
\sigma_{\mathrm{ess}}(\cL_h) = \sigma_{\mathrm{ess}}(\cD_h).
\end{equation}
See \cite[Theorem IV.5.35]{K66} for more details.

The operator $\cL_h$ is not as well-behaved as $\cT_h$ with the cut-norm. In fact, even when a sequence of graphons $(h_n)_{n\in\N}$ converges in cut-norm to a graphon $h$, the eigenvalues and eigenvectors of $\cL_{h_n}$ may not converge to those of $\cL_h$, as was already observed in \cite{DGKR16}, \cite{LBB2008}. Assuming as in \cite{CHHS20} that the reference graphon is rank-1 does not help. In fact, if we assume that $r$ is rank-1 and is continuous, then $0$ is the only eigenvalue of $\cL_r$ and $\sigma_{\mathrm{ess}}(\cL_r)= d_r([0, 1])$, as shown in \cite[Proposition 5.11]{DGKR16}.

If $h$ is the empirical graphon of a graph $G$ with $N$ vertices, then $N\|\cT_h\|$ equals the largest eigenvalue of the adjacency matrix of $G$, and $N\|\cD_h\|$ equals the maximum degree of $G$. In fact, for any graphon $h$ the spectrum of $\cD_h$ equals the range of $d_h$ and the operator norm of $\cD_h$ equals the supremum norm $d_h$, i.e.,
\begin{equation}
\label{eq:degreenorm}
\|\cD_h\|=\| d_h\|_{\infty},
\end{equation}
where $\|d_h\|_{\infty}$ is the $L^\infty$-norm of the function $d_h$. This follows from the fact that $\cD_h$ is a multiplication operator. Let 
\begin{equation}
\label{normgraphon}
\|\cL_h\| = \sup_{ \substack{u \in L^2([0,1])\\ \| u \|_2 = 1} } \| \cL_hu \|_2
\end{equation}
be the operator norm of $\cL_h$, where $\| \cdot \|_2$ denotes the $L^2$-norm. Since $\cL_h$ is a normal operator with a non-negative spectrum, $\|\cL_h\|$ also equals the supremum of the spectrum of $\cL_h$. 

\begin{proposition}{\bf [Properties of the Laplacian operator]}
\label{prop:basics}
$\mbox{}$\\
(i) Let $h$ be a graphon and $\cL_h$ be the Laplacian operator on $L^2([0,1])$. Then $\cL_h$ is a bounded operator, and $h \mapsto \|\cL_h\|$ is lower semi-continuous in the cut-metric.\\
(ii) Let $G$ be a graph with $N$ vertices and $h^{G}$ be the empirical graphon associated with $G$ and let $L_N$ be the Laplacian matrix with spectral norm $\|L_N\|$. Then it follows that 
\begin{equation}
\frac{\|L_N\|}{N} = \|\cL_{h^{G}}\| \qquad \forall\,N.
\end{equation}
\end{proposition}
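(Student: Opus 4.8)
The plan is to isolate the single structural fact that makes everything go through: the Laplacian operator $\cL_h=\cD_h-\cT_h$ is bounded, self-adjoint and non-negative, with quadratic form
\[
\la \cL_h u,u\ra \;=\; \tfrac12\int_{[0,1]^2}\d x\,\d y\, h(x,y)\,|u(x)-u(y)|^2 \;=:\; Q_u(h),
\]
obtained by symmetrising the defining integral of $\cL_h$ in $x$ and $y$ and using $h(x,y)=h(y,x)$. Boundedness follows from $\|\cD_h\|=\|d_h\|_\infty\le 1$ (a multiplication operator) and $\|\cT_h\|\le\|\cT_h\|_{\mathrm{HS}}=(\int_{[0,1]^2}h^2)^{1/2}\le 1$, so $\|\cL_h\|\le 2$; non-negativity is clear from $h\ge 0$ in the display above. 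Consequently $\|\cL_h\|=\sup_{\|u\|_2=1}Q_u(h)$, and since $L^\infty([0,1])$ is dense in $L^2([0,1])$ and $u\mapsto Q_u(h)$ is $L^2$-continuous (as $\cL_h$ is bounded), the supremum may be restricted to $u\in L^\infty$.

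For the lower semi-continuity in part (i), I would fix $u\in L^\infty$ and expand the square: $Q_u(h)$ becomes a fixed linear combination of integrals of the form $\int_{[0,1]^2}\d x\,\d y\, h(x,y)f(x)g(y)$ with $f,g\in\{u^2,\,u,\,1\}\subset L^\infty$. By the standard comparison of the cut-norm with the $(L^\infty\to L^1)$-norm of a kernel, each such integral changes by at most $4\|f\|_\infty\|g\|_\infty\,d_\square(h_1,h_2)$, whence $|Q_u(h_1)-Q_u(h_2)|\le c\,\|u\|_\infty^2\,d_\square(h_1,h_2)$ for an absolute constant $c$. Thus each $Q_u$ is Lipschitz, hence continuous, in $d_\square$, and $h\mapsto\|\cL_h\|$, being a supremum of such functions, is lower semi-continuous in $d_\square$. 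To obtain lower semi-continuity on $(\Wt,\des)$, I would note that conjugation by the unitary $u\mapsto u\circ\phi$ gives $\|\cL_{h^\phi}\|=\|\cL_h\|$ for every $\phi\in\mathcal M$, so the functional descends to $\Wt$; given $\des(\h_n,\h)\to 0$, pick $\phi_n\in\mathcal M$ with $d_\square(h_n,h^{\phi_n})\to 0$, set $g_n=h_n^{\phi_n^{-1}}$ so that $d_\square(g_n,h)\to 0$ and $\|\cL_{g_n}\|=\|\cL_{h_n}\|$, and apply the $d_\square$-statement.

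For part (ii), let $h=h^{G}$ and let $V_N\subset L^2([0,1])$ be the $N$-dimensional subspace of functions constant on each interval $I_i=[\tfrac{i-1}{N},\tfrac{i}{N})$, with orthogonal complement $V_N^\perp=\{u:\int_{I_i}u=0\ \forall i\}$. A direct computation shows $\cL_h$ maps $V_N$ into itself: identifying $u\in V_N$ with the vector $\mathbf u=(u_i)_i$, one gets $\cL_h u\leftrightarrow\tfrac1N L_N\mathbf u$ (the $A_N$ and $D_N$ pieces coming from $\cT_h$ and $\cD_h$), and since the identification $V_N\cong\R^N$ is a scalar multiple of an isometry this yields $\|\cL_h|_{V_N}\|=\tfrac1N\|L_N\|$. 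By self-adjointness $V_N^\perp$ is also invariant, and on it $\cT_h$ vanishes while $\cD_h$ is multiplication by the step function $x\mapsto d_i/N$ on $I_i$ (with $d_i$ the degree of vertex $i$), so $\|\cL_h|_{V_N^\perp}\|=\tfrac1N\max_i d_i$. An orthogonal direct sum into invariant subspaces gives $\|\cL_h\|=\max\bigl(\tfrac1N\|L_N\|,\ \tfrac1N\max_i d_i\bigr)$, and testing $L_N$ against the standard basis vector $e_v$ of a maximal-degree vertex $v$ yields $\|L_N\|\ge\|L_N e_v\|_2=\sqrt{d_v(d_v+1)}\ge\max_i d_i$, so the first term dominates and $\|\cL_h\|=\|L_N\|/N$.

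I expect the lower semi-continuity to be the real point. As the preceding discussion in the paper emphasises, cut-norm convergence does not control the eigenvectors of $\cL_h$, so one should not hope for continuity; the resolution is precisely that writing $\|\cL_h\|$ as a supremum over $u$ of the cut-continuous functionals $Q_u$ yields lower semi-continuity for free (suprema preserve it), with no need for any convergence of near-optimisers. The only additional care needed is the passage from $d_\square$ to $\des$, which is why the unitary-invariance step is built into the argument; part (ii) is then a routine, if slightly bookkeeping-heavy, reducing-subspace computation.
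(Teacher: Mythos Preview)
Your proof is correct, and for part (i) it is genuinely different from the paper's. The paper argues separately: it first proves that $h\mapsto\|\cD_h\|$ is lower semi-continuous in $d_\square$ (Lemma~\ref{lem:lsc}), recalls that $h\mapsto\|\cT_h\|$ is cut-continuous, and then shows $\liminf_n\|\cL_{f_n}g\|_2^2\ge\|\cL_fg\|_2^2$ by expanding the square into $\|\cD_\cdot g\|_2^2$, $\|\cT_\cdot g\|_2^2$, and the cross term $\la\cD_\cdot g,\cT_\cdot g\ra$, controlling the latter via a simple-function approximation and a direct estimate against $d_\square$. Your route via the quadratic form $Q_u(h)=\tfrac12\int h(x,y)|u(x)-u(y)|^2$ is shorter and more conceptual: each $Q_u$ with $u\in L^\infty$ is linear in $h$ and, after expanding the square, is a finite combination of pairings $\int h\,f\otimes g$ with $f,g\in L^\infty$, hence Lipschitz in $d_\square$ by the standard equivalence of the cut-norm with the $L^\infty\!\to\! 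L^1$ kernel norm; taking a supremum immediately yields lower semi-continuity. The paper in fact uses the same quadratic-form identity later (equation~\eqref{eq:normincreasing}) but not for this purpose. Your passage from $d_\square$ to $\delta_\square$ via unitary invariance is exactly the reduction the paper invokes in one line (``without loss of generality \dots converges in the cut-distance'').

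For part (ii) the two arguments have the same content but different packaging. The paper matches the eigenvalue equations of $\tfrac1N L_N$ and $\cL_{h^G}$ on step functions, then invokes \cite{DGKR16} for $\sigma_{\mathrm{ess}}(\cL_{h^G})=\mathrm{range}(d_{h^G})$ and \cite{GM94} for $\lambda_{\max}(L_N)\ge\max_i d_i$. Your invariant-subspace decomposition $L^2=V_N\oplus V_N^\perp$ is cleaner and self-contained: the identification $\cL_{h^G}|_{V_N}\cong\tfrac1N L_N$ is the paper's eigenvalue matching, while $\cL_{h^G}|_{V_N^\perp}=\cD_{h^G}|_{V_N^\perp}$ recovers the essential-spectrum statement directly, and the test $\|L_Ne_v\|_2=\sqrt{d_v(d_v+1)}$ replaces the citation to \cite{GM94}.
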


\begin{remark}\label{rem:continuity}
{\rm Note that $h \mapsto \|\cL_h\|$ is not continuous in the cut-metric. For example, consider the sequence of graphons $(h_N)_{N\in\N}$ such that $h_N$ is the empirical graphon of the $N$-star graph (i.e., $1$ vertex connected by an edge to each of the $N-1$ other vertices, and no further edges). Then $h_N \downarrow 0$ as $N\to\infty$ in the cut-metric, but $\|\cL_{h_N}\|=1$ for all $N\in\N$.} \hfill$\spadesuit$
\end{remark}

Proposition~\ref{prop:basics} is proven in Section~\ref{sec:proofprop}.

%%%

\subsection{Main theorems: downward large deviations} 
\label{subsec:LDPdown}

Let 
\begin{equation}
\lambda_N = \|L_N\| = \sup_{ \substack{u \in L^2([0,1])\\ \|u\|_2 = 1}} \|L_N u\|_2.
\end{equation} 
be the maximal eigenvalue of $L_N$, where $\|\cdot\|_2$ denotes the $L^2$-norm. Abbreviate
\begin{equation}
C_r = \|\cL_r\|.
\end{equation} 
Our goal is to show that $\lambda_N/N$ satisfies a downward LDP as $N \to \infty$, with rate $\binom{N}{2}$ and with a rate function that can be analysed in detail. We write $\mathbb{P}^*_N$ to denote the law of $\lambda_N$.

\begin{theorem}{\bf [Downward LDP]}
\label{thm:main}
Subject to \eqref{Assbasic}--\eqref{Assbasic2},
\begin{equation}
\lim_{N\to\infty} \binom{N}{2}^{-1} \log  \mathbb{P}^*_N(\lambda_N/N \leq \beta)
= - \psi_r(\beta) , \qquad \beta \in [0,C_r],
\end{equation}
with
\begin{equation}
\label{rf}
\psi_r(\beta)  = \inf_{ \substack{\widetilde{h} \in \widetilde{\mathcal{W}}\\ \|\cL_{\widetilde{h}}\| \leq \beta} } J_r(\widetilde{h})
= \inf_{ \substack{h \in \mathcal{W}\\ \|\cL_h\| \leq \beta} } I_r(h).
\end{equation}
\end{theorem}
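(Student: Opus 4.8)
The strategy is to transfer the large-deviation cost from the empirical graphon (Theorem~\ref{thm:LDPinhom}) to the top Laplacian eigenvalue via Proposition~\ref{prop:basics}, keeping in mind that $h\mapsto\|\cL_h\|$ is only lower semi-continuous. First, reformulate the event. By Proposition~\ref{prop:basics}(ii) one has $\lambda_N/N=\|\cL_{h^{G_N}}\|$; since $\|\cL_h\|$ is invariant under measure-preserving relabelings it descends to a well-defined function $\widetilde h\mapsto\|\cL_{\widetilde h}\|$ on $\widetilde\W$, so $\P^*_N(\lambda_N/N\le\beta)=\widetilde\P_N(\A_\beta)$ with $\A_\beta:=\{\widetilde h\in\widetilde\W:\|\cL_{\widetilde h}\|\le\beta\}$. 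By Proposition~\ref{prop:basics}(i) the set $\A_\beta$ is closed. The two expressions in \eqref{rf} agree because $J_r(\widetilde h)=\inf_{\phi\in\mathcal M}I_r(h^\phi)$ and $\|\cL_{h^\phi}\|=\|\cL_h\|$; write $\psi_r(\beta)$ for their common value, which is finite for all $\beta\in[0,C_r]$ (test with $h\equiv a$, $a\in(0,1]$, $a\le\beta$, recalling $\|\cL_a\|=a$, when $\beta>0$, and with $h\equiv0$ when $\beta=0$). The upper bound is now immediate: since $\A_\beta$ is closed, the upper bound in Theorem~\ref{thm:LDPinhom} gives
\[
\limsup_{N\to\infty}\binom N2^{-1}\log\P^*_N(\lambda_N/N\le\beta)\le-\inf_{\widetilde h\in\A_\beta}J_r(\widetilde h)=-\psi_r(\beta).
\]

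For the lower bound the open-set bound of Theorem~\ref{thm:LDPinhom} is useless, because $\A_\beta$ has empty interior: gluing a vanishingly small star onto any graphon keeps its cut-class but makes $\|\cL_\cdot\|$ jump. I would instead tilt the measure. Fix $\eps>0$ and choose $h^*\in\W$ with $\|\cL_{h^*}\|\le\beta$ and $I_r(h^*)<\psi_r(\beta)+\tfrac\eps2$. Replacing $h^*$ by its conditional expectation on a sufficiently fine product partition of $[0,1]^2$, and then contracting slightly (multiplying by $1-\eta$, which multiplies $\|\cL_\cdot\|$ by $1-\eta$ and changes $I_r$ continuously by dominated convergence using \eqref{Assbasic}), one obtains a finite-block graphon $\bar h\in\W$ with the \emph{strict} bound $\|\cL_{\bar h}\|<\beta$ and $I_r(\bar h)<\psi_r(\beta)+\eps$; this is the delicate step, discussed last. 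Let $\bar r_N$ be the blow-up of $\bar h$ to the $N$-block partition and let $\bar\P_N$ be the law of the inhomogeneous ERRG with edge probabilities $\bar r_N$.

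Two standard facts then close the argument. (i) \emph{Concentration under $\bar\P_N$:} from $L_N=D_N-A_N$ and $\|L_N\|\le\|\EE_{\bar r_N}L_N\|+\|D_N-\EE_{\bar r_N}D_N\|+\|A_N-\EE_{\bar r_N}A_N\|$, the first term satisfies $\|\EE_{\bar r_N}L_N\|/N\to\|\cL_{\bar h}\|<\beta$, the second is $o(N)$ with overwhelming probability (Hoeffding for the binomial degrees plus a union bound), and the third is $O(\sqrt N)$ with overwhelming probability (standard non-asymptotic bounds on the norm of symmetric matrices with independent entries); hence $\bar\P_N(\lambda_N/N\le\beta)\to1$. (ii) \emph{Cost of the tilt:} $D(\bar\P_N\,\|\,\P_N)=\sum_{i<j}\cR\big((\bar r_N)_{ij}\mid(r_N)_{ij}\big)$, and $\binom N2^{-1}D(\bar\P_N\,\|\,\P_N)\to I_r(\bar h)$ by \eqref{Assbasic2} (the block values of $\bar h$ are bounded and eventually constant, and $\log r_N,\log(1-r_N)\to\log r,\log(1-r)$ in $L^1$). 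Applying the elementary entropy inequality $\log\P_N(A)\ge-[D(\bar\P_N\|\P_N)+\log2]/\bar\P_N(A)$ with $A=\{\lambda_N/N\le\beta\}$ and dividing by $\binom N2$ yields
\[
\liminf_{N\to\infty}\binom N2^{-1}\log\P^*_N(\lambda_N/N\le\beta)\ge-I_r(\bar h)>-\psi_r(\beta)-\eps,
\]
and $\eps\downarrow0$ gives the matching lower bound. (For $\beta=0$ the event is $\{G_N\text{ has no edges}\}$, with probability $\prod_{i<j}(1-(r_N)_{ij})$; by $\|\log(1-r_N)-\log(1-r)\|_1\to0$ the limit equals $-\int_{[0,1]^2}\cR(0\mid r)=-I_r(0)=-\psi_r(0)$.)

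The main obstacle is the approximation step in the lower bound: one must know that $\psi_r(\beta)$ is approached by finite-block (or continuous) graphons obeying the \emph{strict} constraint $\|\cL_{\bar h}\|<\beta$. Because $h\mapsto\|\cL_h\|$ is not upper semi-continuous, this is not automatic; it amounts to showing that conditional expectation (block-averaging) does not raise $\|\cL_h\|$ in the limit. I expect this to be proved from the spectral structure of $\cL_h=\cD_h-\cT_h$: its essential spectrum is that of $\cD_h$, whose supremum is $\|d_h\|_\infty$ and cannot be enlarged by block-averaging (conditional expectation is an $L^\infty$-contraction and $\|d_h\|_\infty\le\|\cL_h\|$), while any isolated eigenvalue above the essential spectrum is controlled via compactness of $\cT_h$ and the norm-convergence $\cT_{\bar h}\to\cT_{h^*}$. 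This functional-analytic input is the technical heart; the two concentration bounds, the entropy inequality, and the entropy computation in (ii) are routine.
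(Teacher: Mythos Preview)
Your strategy coincides with the paper's: the upper bound via lower semi-continuity of $h\mapsto\|\cL_h\|$ and closedness of $\A_\beta$, and the lower bound via a change of measure to an inhomogeneous ERRG with block reference graphon $\bar h$ satisfying the strict bound $\|\cL_{\bar h}\|<\beta$, followed by concentration (Hoeffding for degrees, cut-norm for the adjacency part) and the entropy/Jensen computation. The paper's organisation differs only cosmetically (it passes through continuous graphons first, then block, then arbitrary), and it uses Jensen directly rather than your entropy inequality, but these are equivalent.

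The one place where the paper's argument is sharper than your sketch is precisely the step you flag as the technical heart: showing that block-averaging does not increase $\|\cL_h\|$. Your proposed route via the essential spectrum plus ``compactness of $\cT_h$ and norm-convergence $\cT_{\bar h}\to\cT_{h^*}$'' would yield \emph{convergence} of norms rather than the needed \emph{inequality}, and would moreover require $\|d_{\bar h_N}-d_{h^*}\|_\infty\to 0$, which block-averaging does not give in general. The paper instead proves $\|\cL_{\bar h_N}\|\le\|\cL_{h^*}\|$ for \emph{every} $N$ by a direct quadratic-form computation: if $\|\cL_{\bar h_N}\|>\|d_{\bar h_N}\|_\infty$ then it is attained at an eigenfunction $u$, and (by the argument in Proposition~\ref{prop:basics}(ii)) any eigenfunction of a block-graphon Laplacian is block-constant, whence $\langle\cL_{\bar h_N}u,u\rangle=\langle\cL_{h^*}u,u\rangle\le\|\cL_{h^*}\|$. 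Combined with the easy case $\|\cL_{\bar h_N}\|=\|d_{\bar h_N}\|_\infty\le\|d_{h^*}\|_\infty\le\|\cL_{h^*}\|$ (your $L^\infty$-contraction observation), this closes the gap cleanly without any limiting argument or appeal to perturbation theory.
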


\noindent
The second equality in \eqref{rf} uses that $\|\cL_r\| = \|\cL_{r^\phi}\|$ for any $\phi \in \mathcal{M}$, as is evident after replacing $u$ by $u^{\phi^{-1}}$ in \eqref{normgraphon} given by $u^{\phi^{-1}}(x) = u(\phi^{-1}(x))$. Since the maximal eigenvalue is invariant under relabelling of the vertices, we need not worry about the equivalence classes. 

%%%%%%%%%%%%%%%%%%%
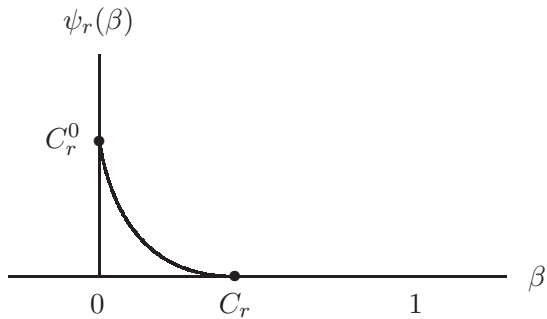
\begin{figure}[htbp]
\begin{center}
\setlength{\unitlength}{0.6cm}
\begin{picture}(10,6)(0,0)
%%%
{\thicklines
\qbezier(0,0)(0,2)(0,4.9)
\qbezier(-2,0)(4,0)(9,0)
\qbezier(3,0)(5,0)(7,0)
\qbezier(3,0)(0.5,0)(0,3)
}
\put(2.65,-.8){$C_r$}
\put(6.85,-.8){$1$}
\put(-0.2,-.8){$0$}
\put(-1.2,2.85){$C^0_r$}
\put(9.5,-.2){$\beta$}
\put(-.8,5.5){$\psi_r(\beta)$}
\put(0,3){\circle*{0.25}}
\put(3,0){\circle*{0.25}}
\end{picture}
\end{center}
\vspace{0.5cm}
\caption{\small Graph of $\beta \mapsto \psi_r(\beta)$.}
\label{fig-rfdown}
\vspace{0.2cm}
\end{figure}
%%%%%%%%%%%%%%%%%%%%%%%%%%%%%%%

Let 
\begin{equation}
C_r^0 = \int_{[0,1]^2} \log \tfrac{1}{1-r}. 
\end{equation}
When $\beta = C_r$, the optimal graphon is the reference graphon $r$ almost everywhere, for which $I_r(r) = 0$, and no large deviation occurs. When $\beta = 0$, the optimal graphon is the zero graphon $\underline{0} \equiv 0$, for which $I_r(\underline{0}) = C_r^0$ (see Fig.~\ref{fig-rfdown}).

\begin{theorem}{\bf [Properties of the rate function]}
\label{thm:rfprop}
Subject to \eqref{Assbasic}--\eqref{Assbasic2}:\\
(i) $\psi_r$ is continuous and strictly decreasing on $[0,C_r]$, with $\psi_r(0) = C^0_r > 0$ and $\psi_r(C_r) = 0$.\\
(ii) For every $\beta \in [0,C_r]$, the set of minimisers of the variational formula for $\psi_r(\beta)$ in \eqref{rf} is non-empty 
and compact in $\widetilde{\mathcal{W}}$.
\end{theorem}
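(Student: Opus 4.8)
The plan is to prove (ii) first and then invoke it in the proof of (i). For (ii), fix $\beta\in[0,C_r]$ and set $K_\beta=\{\widetilde h\in\widetilde{\mathcal W}:\|\mathcal L_{\widetilde h}\|\le\beta\}$. Since $(\widetilde{\mathcal W},\delta_\square)$ is compact and $\widetilde h\mapsto\|\mathcal L_{\widetilde h}\|$ is lower semi-continuous by Proposition~\ref{prop:basics}(i), the sublevel set $K_\beta$ is closed, hence compact; it is nonempty because $\mathcal L_{\underline{0}}=0$ places $\widetilde{\underline{0}}$ in every $K_\beta$. As $J_r$ is a good, in particular lower semi-continuous, rate function, it attains its infimum over the compact set $K_\beta$, and by \eqref{rf} that infimum is $\psi_r(\beta)$; hence the minimiser set is nonempty. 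Moreover it equals $K_\beta\cap\{\widetilde h:J_r(\widetilde h)\le\psi_r(\beta)\}$ (since $J_r\ge\psi_r(\beta)$ on all of $K_\beta$), an intersection of two closed sets, hence a closed subset of $\widetilde{\mathcal W}$ and so compact.

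For (i) I would dispatch the soft parts first. Monotonicity of $\psi_r$ is immediate from the second formula in \eqref{rf}, because the constraint $\{h:\|\mathcal L_h\|\le\beta\}$ grows with $\beta$. For the endpoints, $\|\mathcal L_h\|=0$ forces $h\equiv 0$ a.e. (test $\mathcal L_h u=0$ on indicators $u=\mathbf{1}_A$), so $\psi_r(0)=I_r(\underline{0})=\int_{[0,1]^2}\log\tfrac1{1-r}=C^0_r>0$, while $\|\mathcal L_r\|=C_r$ and $I_r(r)=0$ give $\psi_r(C_r)=0$. Next, $\psi_r(\beta)>0$ for every $\beta\in[0,C_r)$: if $\psi_r(\beta)=0$, pick $h_n$ with $\|\mathcal L_{h_n}\|\le\beta$ and $I_r(h_n)\to 0$; Pinsker's inequality $\mathcal R(a\mid b)\ge 2(a-b)^2$ forces $\|h_n-r\|_2\to 0$, hence $d_\square(h_n,r)\to 0$ and $\widetilde h_n\to\widetilde r$ in $\delta_\square$, so lower semi-continuity of $\|\mathcal L_\cdot\|$ gives $C_r=\|\mathcal L_r\|\le\liminf_n\|\mathcal L_{h_n}\|\le\beta<C_r$, a contradiction.

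The substantive parts of (i), strict monotonicity and continuity, are handled by explicit scaling constructions exploiting the linearity (hence positive homogeneity) of $h\mapsto\mathcal L_h$ in \eqref{Laplaciandef}. For strict monotonicity, given $\beta_1<\beta_2$ in $[0,C_r]$ and $\varepsilon>0$, take $h$ with $\|\mathcal L_h\|\le\beta_1$ and $I_r(h)\le\psi_r(\beta_1)+\varepsilon$ and interpolate towards the reference graphon, $h_t=(1-t)h+t r\in\mathcal W$: then $\mathcal L_{h_t}=(1-t)\mathcal L_h+t\mathcal L_r$, so $\|\mathcal L_{h_t}\|\le\beta_1+t(C_r+\beta_1)$, and with $t_*=(\beta_2-\beta_1)/(C_r+\beta_1)\in(0,1]$ the graphon $h_{t_*}$ is admissible for $\psi_r(\beta_2)$; convexity of $a\mapsto\mathcal R(a\mid b)$ with $\mathcal R(r\mid r)=0$ gives $I_r(h_{t_*})\le(1-t_*)I_r(h)$, whence $\psi_r(\beta_2)\le(1-t_*)(\psi_r(\beta_1)+\varepsilon)$; letting $\varepsilon\downarrow 0$ and using $\psi_r(\beta_1)>0$ yields $\psi_r(\beta_2)<\psi_r(\beta_1)$. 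For continuity, since $\psi_r$ is monotone it suffices to prove lower and upper semi-continuity. Lower semi-continuity: for $\beta_n\to\beta$, pass to a subsequence realizing $\liminf_n\psi_r(\beta_n)$, take minimisers $\widetilde h_n\in K_{\beta_n}$ with $J_r(\widetilde h_n)=\psi_r(\beta_n)$ from (ii), extract a further sub-subsequence $\widetilde h_{n_k}\to\widetilde h$ by compactness, note $\|\mathcal L_{\widetilde h}\|\le\liminf_k\beta_{n_k}\le\beta$ so $\widetilde h\in K_\beta$, and conclude $\psi_r(\beta)\le J_r(\widetilde h)\le\liminf_k\psi_r(\beta_{n_k})=\liminf_n\psi_r(\beta_n)$ from lower semi-continuity of $J_r$. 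Upper semi-continuity (equivalently left-continuity, given monotonicity): for $\beta\in(0,C_r]$, $\beta_n\uparrow\beta$ and $\varepsilon>0$, take $h^*$ with $\|\mathcal L_{h^*}\|\le\beta$ and $I_r(h^*)\le\psi_r(\beta)+\varepsilon$ and scale towards $\underline{0}$, $g_n=(\beta_n/\beta)h^*\in\mathcal W$; then $\|\mathcal L_{g_n}\|=(\beta_n/\beta)\|\mathcal L_{h^*}\|\le\beta_n$, so $\psi_r(\beta_n)\le I_r(g_n)$; since $r\in(0,1)$ a.e. by \eqref{Assbasic}, $a\mapsto\mathcal R(a\mid r(x,y))$ is continuous and $0\le\mathcal R(a\mid r(x,y))\le\log\tfrac1{r(x,y)}+\log\tfrac1{1-r(x,y)}\in L^1([0,1]^2)$, so dominated convergence gives $I_r(g_n)\to I_r(h^*)$, whence $\limsup_n\psi_r(\beta_n)\le\psi_r(\beta)+\varepsilon$; let $\varepsilon\downarrow 0$.

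The step I expect to be the main obstacle is the continuity of $\psi_r$, and specifically its upper-semicontinuous half. Because $h\mapsto\|\mathcal L_h\|$ is merely lower semi-continuous and not continuous (Remark~\ref{rem:continuity}), upper semi-continuity of $\psi_r$ cannot be extracted from the same soft compactness argument that yields lower semi-continuity; one must instead produce explicit admissible competitors under a slightly tightened norm constraint, which is exactly what the scaling $h^*\mapsto(\beta_n/\beta)h^*$ achieves via the positive homogeneity of $h\mapsto\mathcal L_h$. The only remaining technicality is the continuity of $I_r$ along these scalings, which reduces to the uniform $L^1$ domination of $\mathcal R(\cdot\mid r)$ guaranteed by \eqref{Assbasic}.
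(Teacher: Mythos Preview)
Your proof is correct. Part (ii) and the continuity half of (i) follow the paper's approach exactly: compactness of $\widetilde{\mathcal W}$ together with lower semi-continuity of $\widetilde h\mapsto\|\mathcal L_{\widetilde h}\|$ and of $J_r$ for existence of minimisers and right-continuity, and the scaling $h\mapsto\alpha h$ (which the paper writes as $(1-\varepsilon)h$ in the proof of \eqref{eq:variationalformulaequality}) for left-continuity.

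The one genuine difference is the strict monotonicity. The paper invokes the pointwise monotonicity $h_1\le h_2\Rightarrow\|\mathcal L_{h_1}\|\le\|\mathcal L_{h_2}\|$ (from \eqref{eq:normincreasing}) and then cites the proof of \cite[Theorem 1.5(ii)]{CHHS20}. Your argument is self-contained: you interpolate $h_t=(1-t)h+tr$ and use convexity of $a\mapsto\mathcal R(a\mid b)$ together with $I_r(r)=0$ to get $\psi_r(\beta_2)\le(1-t_*)\psi_r(\beta_1)$, having first secured $\psi_r(\beta_1)>0$ via Pinsker and the lower semi-continuity of $\|\mathcal L_\cdot\|$. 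This is arguably cleaner, since it avoids an external reference and makes the positivity step explicit. A small remark: the triangle inequality actually gives $\|\mathcal L_{h_t}\|\le(1-t)\beta_1+tC_r=\beta_1+t(C_r-\beta_1)$, not $\beta_1+t(C_r+\beta_1)$; your stated bound is weaker but still true, and your choice $t_*=(\beta_2-\beta_1)/(C_r+\beta_1)$ is consistent with it, so the argument goes through unchanged.
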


Under the \emph{additional assumptions} that
\begin{eqnarray}
\label{gap}
&\lambda_{\max}(\cL_r)<\|d_r\|_{\infty},\\
\label{rbd1}
&\text{$r$ is bounded away from 0 and $1$}
\end{eqnarray}
we are able to compute the behaviour of $\psi_r$ around $C_r$. Here $\lambda_{\max}(\cL_r)$ denotes the largest eigenvalue of $\cL_r$. Note that \eqref{rbd1} is stronger than \eqref{Assbasic}.  Recall that $d_r(x) = \int_{[0,1]} \d y\,r(x,y)$, $x \in [0,1]$. It is easy to check that, subject to \eqref{gap},
\begin{equation}
C_r = \|d_r\|_\infty.
\end{equation}

\begin{theorem}{\bf [Scaling of the rate function]}
\label{thm:rfscaling}
Subject to \eqref{gap}--\eqref{rbd1},  
\begin{equation}
\label{scalpsir}
\psi_r(\beta) \asymp \int_{S_r(\beta)} \d x\,\frac{1}{v_r(x)}\,(d_r(x)-\beta)^2,
\qquad \beta \uparrow C_r,
\end{equation}
where 
\begin{equation}
S_r(\beta) = \{x \in [0,1]\colon\, d_r(x) \geq \beta\}
\end{equation}
and
\begin{equation}
\label{vrdef}
v_r(x) = \int_{[0,1]} \d y\,r(x,y)[1-r(x,y)]. 
\end{equation}
\end{theorem}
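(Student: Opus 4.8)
I would establish the two-sided bound in \eqref{scalpsir} by bounding $\psi_r(\beta)$ above and below by constant multiples of $\int_{S_r(\beta)}\tfrac1{v_r}(d_r-\beta)^2\,\d x$ as $\beta\uparrow C_r$. Two facts drive everything. First, since $\sigma_{\mathrm{ess}}(\cL_h)=\sigma_{\mathrm{ess}}(\cD_h)$ and $\sigma(\cD_h)=\mathrm{Range}(d_h)$ (a multiplication operator has empty discrete spectrum), one has $\|\cL_h\|=\sup\sigma(\cL_h)\geq\sup\sigma_{\mathrm{ess}}(\cL_h)=\|d_h\|_\infty$ for every graphon $h$; hence the admissibility constraint $\|\cL_h\|\leq\beta$ in \eqref{rf} forces $d_h\leq\beta$ a.e. Second, under \eqref{rbd1} there are constants $0<c\leq C<\infty$ with $2(a-b)^2\leq\mathcal{R}(a\mid b)\leq C(a-b)^2$ for $a,b$ in the relevant compact subinterval of $(0,1)$, while $c\leq v_r\leq\tfrac14$; in particular $\int_{S_r(\beta)}\tfrac1{v_r}(d_r-\beta)^2\,\d x$ and $\int_{S_r(\beta)}(d_r-\beta)^2\,\d x$ are comparable, so the weight $1/v_r$ plays no essential role for the $\asymp$ statement.

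\textbf{Lower bound.} For any admissible $h$ I would combine $\mathcal{R}(a\mid b)\geq2(a-b)^2$, Cauchy--Schwarz in the $y$-variable, and $d_h\leq\beta\leq d_r$ on $S_r(\beta)$ to obtain
\[
I_r(h)=\int_0^1\!\Big(\int_0^1\mathcal{R}\big(h(x,y)\mid r(x,y)\big)\,\d y\Big)\d x\;\geq\;2\int_0^1 |d_h(x)-d_r(x)|^2\,\d x\;\geq\;2\int_{S_r(\beta)}(d_r(x)-\beta)^2\,\d x .
\]
Minimising over admissible $h$ and using $v_r\leq\tfrac14$ gives $\psi_r(\beta)\geq\tfrac12\int_{S_r(\beta)}\tfrac1{v_r}(d_r-\beta)^2\,\d x$, valid for every $\beta\in[0,C_r]$.

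\textbf{Upper bound: the competitor.} For the reverse inequality I would exhibit a near-optimiser. Put $g_\beta=(d_r-\beta)^+/v_r$, which is supported on $S_r(\beta)$, and define
\[
h_\beta(x,y)=r(x,y)-\big(g_\beta(x)+g_\beta(y)\big)\,r(x,y)\big[1-r(x,y)\big] .
\]
Since $\|g_\beta\|_\infty\leq(C_r-\beta)/c\to0$, for $\beta$ close to $C_r$ this $h_\beta$ is a graphon bounded away from $0$ and $1$ with $h_\beta\leq r$; using $\int_0^1 r(x,y)[1-r(x,y)]\,\d y=v_r(x)$ one checks that $d_{h_\beta}(x)\leq d_r(x)-g_\beta(x)v_r(x)\leq\beta$ for every $x$, so $\|d_{h_\beta}\|_\infty\leq\beta$. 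Expanding $(h_\beta-r)^2$ and using $\mathcal{R}(a\mid b)\leq C(a-b)^2$, $r^2(1-r)^2\leq r(1-r)$, $(g_\beta(x)+g_\beta(y))^2\leq 2g_\beta(x)^2+2g_\beta(y)^2$ and $\int_0^1 r(x,y)[1-r(x,y)]\,\d y=v_r(x)$, one gets $I_r(h_\beta)\leq 4C\int_{S_r(\beta)}\tfrac1{v_r}(d_r-\beta)^2\,\d x$, which is the desired bound. Hence the upper bound reduces to showing that $h_\beta$ is admissible in \eqref{rf}, i.e. that $\|\cL_{h_\beta}\|\leq\beta$ and not merely $\|d_{h_\beta}\|_\infty\leq\beta$; granting this, $\psi_r(\beta)\leq I_r(h_\beta)$ finishes the proof.

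\textbf{The main obstacle.} The discrepancy between $\|d_{h_\beta}\|_\infty\leq\beta$ and $\|\cL_{h_\beta}\|\leq\beta$ is exactly a possible discrete eigenvalue of $\cL_{h_\beta}$ lying above $\sigma_{\mathrm{ess}}(\cL_{h_\beta})\subseteq[0,\beta]$ --- the pathology behind Remark~\ref{rem:continuity}. What rescues the argument is that the perturbation here is small in $\|\cdot\|_\infty$, which is far stronger than cut-norm smallness: it forces $\|d_{h_\beta}-d_r\|_\infty\to0$ and $\|\cT_{h_\beta}-\cT_r\|\leq\|h_\beta-r\|_2\to0$, hence $\cL_{h_\beta}\to\cL_r$ in operator norm. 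Combined with $\cL_{h_\beta}\leq\cL_r$ (so $\sigma(\cL_{h_\beta})\subseteq[0,C_r]$) and the spectral gap \eqref{gap} (so that $\cL_r$ has only essential spectrum in a fixed window $(\lambda_{\max}(\cL_r),C_r]$ below $C_r$), a spectral-stability argument should confine any eigenvalue of $\cL_{h_\beta}$ exceeding $\beta$ to at most $\beta+o(C_r-\beta)$ once $\beta$ is close enough to $C_r$; a residual excess of this size is then absorbed by running the construction with target degree $\beta-o(C_r-\beta)$ in place of $\beta$ and invoking the continuity of $\psi_r$ (Theorem~\ref{thm:rfprop}(i)) together with the fact that $\beta\mapsto\int_{S_r(\beta)}\tfrac1{v_r}(d_r-\beta)^2\,\d x$ retains its order under such a shift. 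I expect that making this ``no spurious large eigenvalue'' statement quantitative --- in particular, showing the excess is genuinely of lower order than $C_r-\beta$ --- is the technically delicate point, and it is here that both \eqref{gap} and \eqref{rbd1} are genuinely used.
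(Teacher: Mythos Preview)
Your lower bound and your competitor $h_\beta$ are both correct and both differ from the paper's route. The paper phrases everything through the single-vertex degree rate function $J_r(x,\beta)$ (the Legendre transform of the log-moment-generating function at $x$): it gets $\psi_r(\beta)\geq\int_{S_r(\beta)}J_r(x,\beta)\,\d x$ from the variational characterisation of $J_r$, and $\psi_r(\beta)\leq 2\int_{S_r(\beta)}J_r(x,\beta)\,\d x$ by choosing as competitor the Cram\'er tilt $\hat r_\beta$ (symmetrised on $S_r(\beta)^2$); a Taylor expansion of $J_r$ at $\beta=d_r(x)$ then produces the factor $\tfrac{1}{2v_r(x)}(d_r(x)-\beta)^2$. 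Your Pinsker-plus-Cauchy--Schwarz lower bound and your first-order perturbation $r-(g_\beta(x)+g_\beta(y))r(1-r)$ are more elementary and lose only multiplicative constants, which is harmless for an $\asymp$ statement.

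The genuine gap is in your resolution of the admissibility obstacle. You correctly reduce to showing that $\cL_{h_\beta}$ has no discrete eigenvalue above $\beta$, but your proposed mechanism --- operator-norm convergence $\cL_{h_\beta}\to\cL_r$ plus a generic ``spectral-stability'' bound confining the excess to $o(C_r-\beta)$ --- does not go through. The problem is that the whole interval $(\lambda_{\max}(\cL_r),C_r]$ lies in $\sigma_{\mathrm{ess}}(\cL_r)\subset\sigma(\cL_r)$, so standard perturbation inequalities of the type $\mathrm{dist}(\sigma(\cL_{h_\beta}),\sigma(\cL_r))\leq\|\cL_{h_\beta}-\cL_r\|$ say nothing against a discrete eigenvalue of $\cL_{h_\beta}$ sitting anywhere near $C_r$: the limiting operator already has spectrum there. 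And since $\|\cL_{h_\beta}-\cL_r\|$ is only of order $C_r-\beta$ (the degree perturbation alone is that large), you cannot extract an $o(C_r-\beta)$ excess, which is what your shifting trick would need; an $O(C_r-\beta)$ excess does not obviously preserve the order of $\int_{S_r(\beta)}\tfrac{1}{v_r}(d_r-\beta)^2\,\d x$ under the shift.

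The paper closes the gap by a different mechanism: it shows that any subsequential limit $\lambda'$ of $\lambda_{\max}(\cL_{h_\beta})$ as $\beta\uparrow C_r$ must be an \emph{eigenvalue} of $\cL_r$, not merely a point of $\sigma(\cL_r)$. The argument applies the implicit function theorem in Banach spaces to the map $F(g,\mu,u)=\cL_g u-\mu u$ on $L^\infty([0,1]^2)\times\R\times L^2([0,1])$: if $\lambda'$ were not an eigenvalue of $\cL_r$, the partial derivative $u\mapsto(\cL_r-\lambda')u$ would have trivial kernel, and the implicit function theorem would force the only solution of $F(h_\beta,\lambda_\beta,u)=0$ near $(r,\lambda',0)$ to be $u=0$, contradicting the existence of a unit eigenvector for $\cL_{h_\beta}$ at $\lambda_\beta$. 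Once eigenvalue limits are known to be eigenvalues, \eqref{gap} gives $\limsup_{\beta\uparrow C_r}\lambda_{\max}(\cL_{h_\beta})\leq\lambda_{\max}(\cL_r)<C_r$, hence $\lambda_{\max}(\cL_{h_\beta})<\beta$ outright for $\beta$ close to $C_r$ --- no excess to absorb at all. This ``eigenvalue limits are eigenvalues, not essential spectrum'' step is the missing idea in your proposal.
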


The proofs of Theorems~\ref{thm:main}--\ref{thm:rfscaling} are given in Sections~\ref{sec:proofmain}--\ref{sec:proofrfscaling}.

%%%

\subsection{Main theorems: upward large deviations} 
\label{subsec:LDPup}

Put
\begin{equation}
\label{Jrdef}
J_r(x,\beta) = \int_{[0,1]} \d y\, \mathcal{R}\big(\widehat{r}_\beta(x,y)\mid r(x,y)\big), \qquad x \in [0,1], 
\end{equation} 
where
\begin{equation}
\label{Cramer}
\widehat{r}_\beta(x,y)=\frac{\e^{\theta(x,\beta)} r(x,y)}{\e^{\theta(x,\beta)} r(x,y)+[1-r(x,y)]}
\end{equation}
is a \emph{Cram\'er-type transform} of the reference graphon $r$, with a Langrange multiplier function $\theta(x,\beta)$, $x \in [0,1]$, chosen such that $\int_{[0,1]} \d y\,\widehat{r}_\beta(x,y) = \beta$, $x \in [0,1]$. Note that $\widehat{r}_\beta$ does not need to be symmetric and therefore is not necessarily a graphon. Under the \emph{additional assumptions} that
\begin{eqnarray}
\label{everywhere}
&\lim_{N\to\infty}r_N(x,y) = r(x,y) \quad \forall\, (x,y)\in[0,1]^2,\\
\label{infconv}
&\lim_{N\to\infty}\| r_N-r\|_\infty = 0,\\
\label{posdef}
&\text{$r$ is non-negative definite (i.e., $\cT_r$ is a non-negative definite operator)}
\end{eqnarray}
we are able to derive an upward LDP and identify the associated rate function in term of $J_r$. Note that \eqref{rbd1} and \eqref{everywhere}--\eqref{infconv} together are stronger than \eqref{Assbasic}--\eqref{Assbasic2}.

\begin{theorem}{\bf [Upward LDP]}
\label{thm:mainalt}
Subject to \eqref{rbd1} and \eqref{everywhere}--\eqref{posdef},
\begin{equation}
\lim_{N\to\infty} N^{-1} \log  \mathbb{P}^*_N(\lambda_N/N \geq \beta)
= - \widehat{\psi}_r(\beta) , \qquad \beta \in [C_r,1],
\end{equation}
with
\begin{equation}
\label{rfalt}
\widehat{\psi}_r(\beta) = \inf_{x\in[0,1]} J_r(x,\beta).
\end{equation} 
\end{theorem}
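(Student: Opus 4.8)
The plan is to establish matching upper and lower bounds for $N^{-1}\log\mathbb{P}^*_N(\lambda_N/N \ge \beta)$, $\beta\in[C_r,1]$, by localising the large deviation to a single vertex of anomalously high degree. The heuristic is that the cheapest way to push $\lambda_N/N$ above $\beta\ge C_r$ is to raise the degree of one vertex $\lceil Nx\rceil$ to roughly $\beta N$ while leaving the rest of the graph essentially typical; under the assumption \eqref{posdef} that $\cT_r$ is non-negative definite (so that $\cL_r=\cD_r-\cT_r\preceq\cD_r$ in the bulk and no spectral mass is created by off-diagonal effects), $\|L_N\|/N$ is then governed by the maximal degree up to lower-order corrections. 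The cost of raising vertex $x$'s degree to $\beta N$ is, by Cram\'er's theorem for the sum of independent $\mathrm{Bernoulli}((r_N)_{xj})$ random variables, asymptotically $N\,J_r(x,\beta)$ with $J_r$ as in \eqref{Jrdef}--\eqref{Cramer}, the Cram\'er transform $\widehat r_\beta(x,\cdot)$ being the tilted connection profile with prescribed row-sum $\beta$; optimising over which vertex to use gives the rate $\widehat\psi_r(\beta)=\inf_x J_r(x,\beta)$.

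For the \textbf{lower bound}, I would fix $x_0$ nearly attaining the infimum in \eqref{rfalt}, fix a small $\veps>0$, and consider the event that the vertex $i_0=\lceil Nx_0\rceil$ has degree at least $(\beta+\veps)N$. On this event a rank-one test vector localised near $e_{i_0}$ (more precisely, $u = e_{i_0} - N^{-1}\sum_j e_j$ or a suitable unit perturbation thereof) shows $\lambda_N \ge \langle u, L_N u\rangle \ge (\beta+\veps)N - o(N)$, using that the Laplacian quadratic form is $\sum_{i<j}A_N(i,j)(u_i-u_j)^2$ and that the deleted-vertex contribution and cross-terms are $O(1)$ relative to the leading degree term; here \eqref{posdef} is again used to control the sign of the remaining block. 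The probability of the degree event is, by the lower bound in Cram\'er's theorem (valid since $r$ is bounded away from $0$ and $1$ by \eqref{rbd1}, and $r_N\to r$ uniformly by \eqref{infconv}, so the log-moment generating functions converge uniformly), at least $\exp[-N(J_r(x_0,\beta+\veps)+o(1))]$. Letting $N\to\infty$, then $\veps\downarrow 0$, and using continuity of $\beta\mapsto J_r(x_0,\beta)$ (which follows from implicit-function smoothness of the Lagrange multiplier $\theta(x_0,\cdot)$) yields the lower bound $\widehat\psi_r(\beta)$.

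For the \textbf{upper bound}, the key structural input is that $\lambda_N/N\ge\beta$ forces some vertex to have large degree: by Proposition~\ref{prop:basics}(ii), $\lambda_N/N=\|\cL_{h^{G_N}}\|$, and one needs a deterministic bound of the form $\|\cL_{h^{G_N}}\| \le N^{-1}\max_i D_N(i,i) + (\text{correction})$, where the correction is $o(1)$ on the overwhelmingly likely event that $h^{G_N}$ is $\delta_\square$-close to $r$ and $\cT_r\succeq0$; this is where \eqref{posdef}, \eqref{everywhere} and \eqref{infconv} do the work, since on the complement of a set of superexponentially small probability in the scale $N$ the graph concentrates around $r$ in cut-metric and the spectral contribution of the "bulk" Laplacian stays below $C_r+\veps$. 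Granting this, $\{\lambda_N/N\ge\beta\}\subseteq \bigcup_{i=1}^N\{D_N(i,i)\ge(\beta-\veps)N\}\cup(\text{bad set})$; a union bound over the $N$ vertices contributes only a factor $N=\e^{o(N)}$, and for each $i$, $\mathbb{P}(D_N(i,i)\ge(\beta-\veps)N)\le\exp[-N(J_{r_N}(i/N,\beta-\veps)+o(1))]$ by the Cram\'er upper bound, with $J_{r_N}\to J_r$ uniformly by \eqref{infconv}. Taking the max over $i$, then $N\to\infty$, then $\veps\downarrow0$ with continuity of $J_r(x,\cdot)$ and lower semicontinuity/compactness in $x$ (so the $\inf_x$ is attained and stable under perturbation) gives the matching upper bound $\widehat\psi_r(\beta)$.

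The \textbf{main obstacle} is the deterministic/near-deterministic spectral estimate in the upper bound: controlling $\|\cL_{h^{G_N}}\|$ in terms of the maximal degree plus a vanishing correction. The Laplacian is not continuous in the cut-metric (Remark~\ref{rem:continuity}), precisely because of star-like perturbations — which is exactly the mechanism we are exploiting — so one cannot simply invoke $\delta_\square$-closeness of $h^{G_N}$ to $r$. The resolution I would pursue is a two-scale decomposition: split $L_N = D_N - A_N$ and write $A_N = \mathbb{E}[A_N] + (A_N-\mathbb{E}[A_N])$; the centred part has operator norm $O(\sqrt N)=o(N)$ with probability $1-\e^{-\Omega(N)}$ by standard concentration for inhomogeneous random matrices (e.g.\ Bandeira--van Handel), while $\mathbb{E}[A_N]=N\cT_{r_N}$ restricted off the high-degree vertices has norm $\le N(\|\cT_r\|+\veps) \le N(C_r+\veps)$ using $\cT_r\succeq 0$ and the identity $C_r=\|d_r\|_\infty$ under the gap assumption — but note Theorem~\ref{thm:mainalt} does \emph{not} assume \eqref{gap}, so instead I would bound the bulk Laplacian directly via $\cL_r\preceq\cD_r$ (valid exactly when $\cT_r\succeq0$), giving $\|\cL_{\text{bulk}}\|\le$ bulk maximal degree $/N\le C_r+\veps$ on the concentration event. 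Reconciling the "remove the high-degree vertices" surgery with these operator-norm bounds — i.e.\ showing the interaction between the handful of high-degree vertices and the bulk contributes only $o(N)$ to $\lambda_N$ — is the delicate point, handled by a rank-$k$ perturbation argument (Weyl's inequality) with $k$ the number of vertices of degree $\ge(\beta-\veps)N$, which is $O(1)$ on the relevant event.
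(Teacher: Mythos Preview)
Your overall strategy --- reduce to the LDP for the maximum degree and show that $\lambda_N$ and $\max_i D_N(i,i)$ agree to within $o(N)$ on an event whose complement is negligible at scale $N$ --- is exactly the paper's, and your lower bound is essentially the same (the paper just uses the classical fact $\lambda_{\max}(L_N)\ge\max_i D_N(i,i)$ in place of your test-vector argument, which amounts to the same thing with $u=e_{i_0}$).

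For the upper bound, however, you are making life harder than necessary. The paper bypasses the surgery entirely with a one-line application of Weyl's inequality to the \emph{full} decomposition $L_N=D_N+(-A_N)$:
\[
\lambda_{\max}(L_N)\ \le\ \lambda_{\max}(D_N)-\lambda_{\min}(A_N).
\]
No removal of high-degree vertices, no bulk/outlier split, no rank-$k$ perturbation is needed; the entire ``interaction'' term you worry about is absorbed into $-\lambda_{\min}(A_N)$. The task then reduces to showing that $N^{-1}\lambda_{\min}(A_N)\to 0$ at a rate that is super-exponential in $N$. The paper does this by noting that $h\mapsto\lambda_{\min}(\cT_h)$ is continuous in the cut-norm, so by the contraction principle and Theorem~\ref{thm:LDPinhom} (which lives at rate $\binom{N}{2}$), together with $\lambda_{\min}(\cT_r)=0$ from \eqref{posdef}, the event $\{N^{-1}\lambda_{\min}(A_N)\le -\veps\}$ has probability $\e^{-\Omega(N^2)}$. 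Your alternative --- write $A_N=\EE A_N+(A_N-\EE A_N)$, use $\lambda_{\min}(\EE A_N)\ge -o(N)$ from $\cT_r\succeq 0$ and $\|r_N-r\|_\infty\to0$, and bound $\|A_N-\EE A_N\|$ by matrix concentration --- would also work and is arguably more self-contained, but it should be applied directly to $\lambda_{\min}(A_N)$ rather than fed into a surgery argument. Once you have the sandwich $\lambda_{\max}(D_N)\le\lambda_{\max}(L_N)\le\lambda_{\max}(D_N)-\lambda_{\min}(A_N)$, the proof closes exactly as you outline: union bound over vertices, uniform Cram\'er estimates via \eqref{infconv}, and continuity of $\widehat\psi_r$ from Theorem~\ref{thm:rfpropalt}.
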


%%%%%%%%%%%%%%%%%%%
\begin{figure}[htbp]
\begin{center}
\setlength{\unitlength}{0.6cm}
\begin{picture}(10,6)(0,0)
%%%
{\thicklines
\qbezier(0,0)(0,2)(0,4.9)
\qbezier(-2,0)(4,0)(9,0)
\qbezier(3,0)(5,0)(7,0)
\qbezier(3,0)(5.5,0)(7,3)
}
\qbezier[30](7,3)(7,1.5)(7,0)
\qbezier[40](0,3)(3,3)(7,3)
\put(2.65,-.8){$C_r$}
\put(6.85,-.8){$1$}
\put(-0.2,-.8){$0$}
\put(-1.2,2.85){$C^1_r$}
\put(9.5,-.2){$\beta$}
\put(-.8,5.5){$\widehat{\psi}_r(\beta)$}
\put(7,3){\circle*{0.25}}
\put(3,0){\circle*{0.25}}
\end{picture}
\end{center}
\vspace{0.5cm}
\caption{\small Graph of $\beta \mapsto \widehat{\psi}_r(\beta)$.}
\label{fig-rfup}
\vspace{0.2cm}
\end{figure}
%%%%%%%%%%%%%%%%%%%%%%%%%%%%%%%

Define
\begin{equation}
C_r^1 = \widehat\psi_r(1) = \inf_{x\in[0,1]} \int_{[0,1]} \d y\,\log\frac{1}{r(x,y)}.
\end{equation}
When $\beta = C_r$, the Lagrange multiplier in \eqref{Cramer} is $\theta(x,C_r) \equiv 0 $, for which $J_r(x,C_r) \equiv 0$, and no large deviation occurs. When $\beta = 1$, the Lagrange multiplier is $\theta(x,1) \equiv \infty$, for which $\widehat{\psi}_r(1) = C_r^1$ (see Fig.~\ref{fig-rfup}).

\begin{theorem}{\bf [Properties of the rate function]}
\label{thm:rfpropalt}
Subject to \eqref{rbd1}, $\widehat\psi_r$ is continuous and strictly increasing on $[C_r,1]$, with $\widehat\psi_r(C_r)=0$ and $\widehat\psi_r(1) = C_r^1 > 0$.
\end{theorem}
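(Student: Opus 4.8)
\textbf{Proof proposal for Theorem~\ref{thm:rfpropalt}.}

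The plan is to read off all the claimed properties of $\widehat\psi_r$ directly from its variational representation \eqref{rfalt}, namely $\widehat\psi_r(\beta) = \inf_{x\in[0,1]} J_r(x,\beta)$, by first analysing the one-variable function $\beta \mapsto J_r(x,\beta)$ for fixed $x$ and then controlling how the infimum over $x$ behaves. For fixed $x$, write $r_x(y) = r(x,y)$; assumption \eqref{rbd1} guarantees $0 < c \le r_x \le 1-c < 1$ uniformly in $x,y$. The Cram\'er transform \eqref{Cramer} is the exponential tilt of the family of Bernoulli laws with means $r_x(y)$, and $\theta = \theta(x,\beta)$ is the unique solution of $\int_0^1 \widehat r_\beta(x,y)\,\d y = \beta$. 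Standard convex-analytic facts about exponential families give: $\theta \mapsto \int_0^1 \widehat r_\beta(x,y)\,\d y$ is smooth and strictly increasing from $(0,1)$ onto $(0,1)$, so $\theta(x,\cdot)$ is a well-defined smooth strictly increasing bijection $[C_r,1) \to [0,\infty)$ with $\theta(x,C_r)=0$ (recall $C_r = \|d_r\|_\infty$, but also $J_r(x,C_r)$ vanishes precisely when $\theta(x,C_r)=0$, i.e. at $\beta$ equal to the mean $d_r(x)$ — I will need to double-check that the relevant range is $[C_r,1]$ uniformly, using $C_r = \|d_r\|_\infty \ge d_r(x)$ and monotonicity below). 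Moreover $J_r(x,\beta) = \int_0^1 \mathcal R(\widehat r_\beta(x,y)\mid r_x(y))\,\d y$ is, by the Legendre-duality for relative entropy in exponential families, equal to $\theta \beta - \Lambda_x(\theta)$ where $\Lambda_x(\theta) = \int_0^1 \log(\e^\theta r_x(y) + 1 - r_x(y))\,\d y$; hence $\partial_\beta J_r(x,\beta) = \theta(x,\beta)$, which is $0$ at $\beta = d_r(x)$ and strictly positive for $\beta > d_r(x)$, and $\partial_\beta^2 J_r(x,\beta) = \partial_\beta \theta(x,\beta) > 0$. So for each $x$, $\beta \mapsto J_r(x,\beta)$ is smooth, convex, nonnegative, vanishing only at $\beta = d_r(x)$, and strictly increasing for $\beta \ge d_r(x)$; in particular it is strictly increasing on $[C_r,1]$ since $C_r \ge d_r(x)$.

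Next I would establish joint regularity: $(x,\beta) \mapsto \theta(x,\beta)$ is continuous (in fact one can get continuity of $x \mapsto \theta(x,\beta)$ from continuity of $x\mapsto r_x$ in $L^1$, which follows from $r$ being a graphon together with \eqref{rbd1}; alternatively this is only needed on the level of the infimum and can be bypassed). From $\widehat\psi_r(\beta) = \inf_x J_r(x,\beta)$ with each $J_r(x,\cdot)$ strictly increasing on $[C_r,1]$ and a uniform modulus of increase — here one uses $\Lambda_x'' $ bounded above and below uniformly in $x$ by \eqref{rbd1}, so that $\partial_\beta\theta(x,\beta)$ is bounded above uniformly, giving an equi-Lipschitz bound $|J_r(x,\beta)-J_r(x,\beta')| \le L|\beta-\beta'|$ for all $x$ — the infimum $\widehat\psi_r$ is itself Lipschitz, hence continuous, on $[C_r,1]$. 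Strict monotonicity of $\widehat\psi_r$ requires slightly more than strict monotonicity of each slice, because an infimum of strictly increasing functions need not be strictly increasing; the fix is the uniform lower bound $\partial_\beta J_r(x,\beta) = \theta(x,\beta) \ge \theta(x,C_r') $ for $\beta \ge C_r' > C_r$, but $\theta(x,C_r)$ could itself be $0$ when $d_r(x) = C_r$. So instead I would argue: for $C_r \le \beta_1 < \beta_2 \le 1$, using convexity, $J_r(x,\beta_2) - J_r(x,\beta_1) \ge \theta(x,\beta_1)(\beta_2-\beta_1) \ge 0$, and more usefully $J_r(x,\beta_2) \ge J_r(x,\tfrac{\beta_1+\beta_2}{2}) + \theta(x,\tfrac{\beta_1+\beta_2}{2})\tfrac{\beta_2-\beta_1}{2}$; then either exhibit a uniform-in-$x$ lower bound on $\theta(x,m)$ for $m$ bounded away from $C_r$ — which holds because $\theta(x,m) = 0$ forces $m = d_r(x) \le \|d_r\|_\infty = C_r < m$, a contradiction, and a compactness/continuity argument upgrades this to a uniform positive lower bound — or, cleanly, argue directly that $\widehat\psi_r(\beta_2) = \inf_x J_r(x,\beta_2) \ge \inf_x\big(J_r(x,\beta_1) + \eta(\beta_1,\beta_2)\big) = \widehat\psi_r(\beta_1) + \eta$ where $\eta = \inf_x [J_r(x,\beta_2) - J_r(x,\beta_1)] > 0$; the strict positivity of this last infimum is exactly where the uniform control is used. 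Finally, the boundary values: $\widehat\psi_r(C_r) = \inf_x J_r(x,C_r)$; since $J_r(x,C_r) \ge 0$ always and $J_r(x_0,C_r) = 0$ for any $x_0$ attaining $d_r(x_0) = \|d_r\|_\infty = C_r$ (such $x_0$ exists, or is approached, by definition of $\|d_r\|_\infty$, and a limiting argument with continuity of $J_r(\cdot,C_r)$ — or of $d_r$ where it is continuous — fills the gap), we get $\widehat\psi_r(C_r)=0$; and $\widehat\psi_r(1) = C_r^1 = \inf_x \int_0^1 \log\frac1{r(x,y)}\,\d y$ follows by sending $\theta(x,1) = \infty$, i.e. $\widehat r_1(x,y) = 1$, so $\mathcal R(1\mid r_x(y)) = \log\frac1{r_x(y)}$ and $J_r(x,1) = \int_0^1\log\frac1{r_x(y)}\,\d y$, which is finite and strictly positive by \eqref{rbd1}; interchanging the limit $\beta\uparrow1$ with $\inf_x$ is legitimate by monotone convergence (each $J_r(x,\cdot)$ increasing) and continuity.

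The main obstacle I anticipate is \emph{uniformity in $x$} for the strict-monotonicity claim: the slices $J_r(x,\cdot)$ are all strictly increasing on $[C_r,1]$, but without a uniform modulus their pointwise infimum could fail to be strictly increasing. Assumption \eqref{rbd1} is precisely what rescues this — it yields two-sided bounds $c \le \Lambda_x''(\theta)\,\e^{-\theta}/(\text{stuff}) $, more concretely uniform upper and lower bounds on $\partial_\beta\theta(x,\beta) = 1/\Lambda_x''(\theta(x,\beta))$ over $x\in[0,1]$ and $\beta$ in any compact subinterval of $[C_r,1)$, and a uniform bound near $\beta = 1$ handled separately since $\log\frac1{r}$ is bounded. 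Assembling these into a clean ``$\widehat\psi_r$ is strictly increasing'' statement is the one step that needs care; everything else is bookkeeping with convex duality for Bernoulli exponential families, already implicit in the setup around \eqref{Jrdef}--\eqref{Cramer}. I would also remark that \eqref{everywhere}--\eqref{posdef} play no role here — only \eqref{rbd1} is needed, as stated — and that the continuity of $x \mapsto r_x$ in $L^1$, used lightly for the boundary value at $C_r$, is a standard property of graphons under \eqref{rbd1} (or can be replaced by working with $\inf$ directly and lower semicontinuity).
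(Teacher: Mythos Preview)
Your approach is essentially the paper's: write $J_r(x,\beta)=\theta\beta-\Lambda_x(\theta)$ via Legendre duality, use $\partial_\beta J_r=\theta(x,\beta)$, and reduce all four claims to uniform-in-$x$ bounds on $\theta$. Two places deserve tightening. First, for strict monotonicity your ``compactness/continuity'' upgrade from pointwise to uniform positivity of $\theta(x,\beta_1)$ does not go through under \eqref{rbd1} alone: $r$ is only measurable, so $x\mapsto r(x,\cdot)$ need not be $L^1$-continuous (your parenthetical claim that this is ``a standard property of graphons'' is false), and hence $x\mapsto\theta(x,\beta_1)$ need not be continuous. The paper instead gets the explicit uniform bound $\theta(x,\beta)\ge\log\tfrac{\beta(1-C_r)}{(1-\beta)C_r}$ by Jensen's inequality, using concavity of $r\mapsto \e^\theta r/(\e^\theta r+1-r)$ together with $d_r(x)\le C_r$. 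Your alternative via $\theta'=1/\Lambda_x''$ also works and is arguably cleaner --- since $\Lambda_x''(\theta)=\int\widehat r_\beta(1-\widehat r_\beta)\,\d y\le\tfrac14$ one has $\theta'\ge4$, whence $\theta(x,\beta)\ge4(\beta-d_r(x))\ge4(\beta-C_r)>0$ --- but you should spell this integration step out rather than leave it implicit. Second, for $\widehat\psi_r(C_r)=0$ the paper again avoids any regularity of $x\mapsto d_r(x)$: from the uniform upper bound on $\theta$ (Lemma~\ref{lemma:derivativesJr}) one gets $J_r(x,C_r)\le (C_r-d_r(x))\sup_{\beta}\theta(x,\beta)$, and taking $\inf_x$ with $C_r=\|d_r\|_\infty$ yields $0$. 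The rest of your outline (equi-Lipschitz continuity from $\sup_x|\theta(x,\beta)|<\infty$, and $\widehat\psi_r(1)=C_r^1$ from $\widehat r_1\equiv1$) matches the paper exactly.
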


In order to study the scaling of $\widehat{\psi}_r$ near $C_r$, we need assumption that
\begin{equation}
\label{rct}
\text{$r$ is continuous.}
\end{equation}

\begin{theorem}{\bf [Scaling of the rate function]}
\label{thm:rfscalingalt}
Subject to \eqref{rbd1}, \eqref{posdef} and \eqref{rct},
\begin{equation}
\label{scal1alt}
\widehat{\psi}_r(\beta) \sim \widehat{K}_r(\beta-C_r)^2, \qquad \beta \downarrow C_r,  
\end{equation}
with 
\begin{equation}
\label{curvature}
\widehat{K}_r = \frac{1}{2} \inf_{x \in \cD_r} \frac{1}{v_r(x)},
\end{equation}
where $\mathcal{D}_r = \{x \in [0,1]\colon\,d_r(x) = \|d_r\|_\infty\}$.
\end{theorem}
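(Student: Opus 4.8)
The plan is to combine the explicit variational formula $\widehat\psi_r(\beta) = \inf_{x\in[0,1]} J_r(x,\beta)$ from Theorem~\ref{thm:mainalt} with a second-order Taylor expansion in $\beta$ of the single-site rate function $J_r(x,\beta)$ around $\beta = C_r$, uniformly in $x$. First I would recall that, under \eqref{posdef}, $C_r = \|d_r\|_\infty$ (this is the content of the gap relation, since non-negative definiteness forces $\lambda_{\max}(\cL_r)=\|d_r\|_\infty$), so that the infimum over $x$ in \eqref{rfalt} is governed by points $x$ where $d_r(x)$ is close to its maximum. The key computation is: for fixed $x$, $J_r(x,\beta)$ is the Legendre–Fenchel-type function obtained by tilting the "annealed" measure $r(x,\cdot)$ so that its mean equals $\beta$; by the standard Cramér/relative-entropy identity, $\beta\mapsto J_r(x,\beta)$ is smooth and convex on the relevant interval, with $J_r(x,d_r(x))=0$, $\partial_\beta J_r(x,\beta)\big|_{\beta=d_r(x)}=\theta(x,d_r(x))=0$, and $\partial_\beta^2 J_r(x,\beta)\big|_{\beta=d_r(x)} = 1/v_r(x)$, where $v_r(x)=\int r(x,y)[1-r(x,y)]\,\d y$ is exactly the variance of the un-tilted Bernoulli average. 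Hence
\begin{equation}
J_r(x,\beta) = \frac{(\beta-d_r(x))^2}{2v_r(x)} + o\big((\beta-d_r(x))^2\big), \qquad \beta \to d_r(x),
\end{equation}
with the error controlled uniformly in $x$ because \eqref{rbd1} bounds $r$ away from $0$ and $1$, making $v_r$ bounded away from $0$ and all the derivatives of $\theta(x,\cdot)$ uniformly bounded; continuity of $r$ in \eqref{rct} gives continuity of $x\mapsto d_r(x)$ and $x\mapsto v_r(x)$.

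Next I would analyse $\inf_{x} J_r(x,\beta)$ for $\beta \downarrow C_r = \|d_r\|_\infty$. Writing $g(x) = \|d_r\|_\infty - d_r(x) \geq 0$, we have $\beta - d_r(x) = (\beta - C_r) + g(x)$, so each term behaves like $\frac{((\beta-C_r)+g(x))^2}{2v_r(x)}$. Since $\beta - C_r > 0$ and $g(x)\geq 0$, the term is minimised (up to the uniform error) by making $g(x)$ small, i.e. by taking $x$ near the argmax set $\mathcal D_r = \{x: d_r(x)=\|d_r\|_\infty\}$. On $\mathcal D_r$ one gets $\frac{(\beta-C_r)^2}{2v_r(x)}$, and infimising over $\mathcal D_r$ yields $\widehat K_r(\beta-C_r)^2$ with $\widehat K_r = \tfrac12\inf_{x\in\mathcal D_r} 1/v_r(x)$. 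The matching lower bound is immediate: for any $x$, $((\beta-C_r)+g(x))^2 \geq (\beta-C_r)^2$, so $J_r(x,\beta) \geq \frac{(\beta-C_r)^2}{2v_r(x)}(1+o(1)) \geq \frac{(\beta-C_r)^2}{2\|v_r\|_\infty}(1+o(1))$, and one refines this by noting that any $x$ with $g(x)$ bounded away from $0$ contributes a term of strictly larger order, so only a shrinking neighbourhood of $\mathcal D_r$ matters, on which $v_r(x)\to v_r(\mathcal D_r)$ by continuity. Combining the two bounds gives $\widehat\psi_r(\beta)\sim \widehat K_r(\beta-C_r)^2$.

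The main obstacle will be making the Taylor expansion of $J_r(x,\beta)$ genuinely uniform in $x$, and in particular controlling the remainder on the correct range $|\beta - d_r(x)| \lesssim \beta - C_r + g(x)$, which need not be uniformly small unless $g$ is. The clean way around this is: (a) use \eqref{rbd1} to get, via the explicit form \eqref{Cramer}, uniform bounds on $\theta(x,\beta)$ and its $\beta$-derivatives for $\beta$ in a fixed compact subinterval of $(0,1)$ containing a neighbourhood of $C_r$ and the range of $d_r$ — so the quadratic approximation with uniformly-bounded third-derivative remainder is valid on that whole interval; (b) handle the region where $g(x)$ is not small separately, showing directly from convexity and $J_r(x,C_r + g(x)\text{-scale})>0$ that such $x$ are never near-optimal once $\beta - C_r$ is small. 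One should also verify the easy facts $J_r(x,d_r(x))=0$ and $\partial_\beta J_r(x,d_r(x))=0$ by differentiating \eqref{Jrdef}–\eqref{Cramer} directly (the envelope/Lagrange structure makes the first-derivative-vanishing automatic), and confirm that $\widehat K_r \in (0,\infty)$ using $\eqref{rbd1}$ (so $0 < v_r \le 1/4$) — ensuring $\widehat K_r$ is a genuine finite positive constant and the stated asymptotic equivalence is non-degenerate.
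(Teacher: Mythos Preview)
Your proposal is correct and follows essentially the same route as the paper's proof: both use the uniform Taylor expansion $J_r(x,\beta)=\frac{1}{2v_r(x)}(\beta-d_r(x))^2+\frac{1}{6}\theta''(x,\beta_*)(\beta-d_r(x))^3$ with the third-derivative remainder controlled via the uniform boundedness of $\theta^{(k)}$ under \eqref{rbd1} (the paper's Lemma~\ref{lemma:derivativesJr}), obtain the upper bound by restricting to $x\in\cD_r$, and obtain the lower bound by showing that near-optimal $x$ must approach $\cD_r$ as $\beta\downarrow C_r$ and then invoking continuity of $v_r$. The only cosmetic difference is that the paper works directly with a minimiser $x_*=x_*(\beta)$ and deduces $\beta-d_r(x_*)=O(\beta-C_r)$ from the upper bound, whereas you phrase this as ``only a shrinking neighbourhood of $\cD_r$ matters''; these are the same argument. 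One small inaccuracy: non-negative definiteness \eqref{posdef} gives $\lambda_{\max}(\cL_r)\le\|d_r\|_\infty$ rather than equality, but since the essential spectrum already contains $\|d_r\|_\infty$ your conclusion $C_r=\|d_r\|_\infty$ is unaffected.
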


The proofs of Theorems~\ref{thm:mainalt}--\ref{thm:rfscalingalt} are given in Sections~\ref{sec:proofmainalt}--\ref{sec:proofrfscalingalt}.

%%%

\subsection{Discussion}
\label{subsec:disc}

$\mbox{}$

\medskip\noindent
{\bf 1.} Theorems~\ref{thm:main}--\ref{thm:rfscaling} establish the downward LDP and identify basic properties of the rate function $\psi_r$. Note that $\lim_{\beta \uparrow C_r} S_r(\beta) = \cD_r$. Hence, if $|\cD_r| = 0$, then the scaling of $\psi_r$ near $C_r$ is \emph{faster than quadratic}, which suggest that $\widehat{\lambda}_N/N$ does \emph{not} satisfy a standard central limit theorem. On the other hand, if $|\cD_r| > 0$, then the scaling is quadratic and a standard central limit is expected to hold. Both questions are open. Several scenarios for the precise scaling are possible depending on how $d_r$ scales near its maxima.

\medskip\noindent
{\bf 2.}
Assumptions \eqref{Assbasic}--\eqref{Assbasic2} are basic because they underly Theorem~\ref{thm:LDPinhom}, which is the jump board for our downward LDP in Theorem~\ref{thm:main} and the general properties of the downward rate function in Theorem~\ref{thm:rfprop}. Assumption~\eqref{gap} is needed for the upper bound in the scaling of the downward rate function in Theorem~\ref{thm:rfscaling}. It is the most severe assumption in the present paper, although it is still satisfied for a large class of graphons, for instance, those $r$ that are rank-1 and continuous (see \cite[Proposition 5.11]{DGKR16}). Assumption~\eqref{gap} guarantees that, for all graphons $h$ close enough to $r$ in $L^2$-norm, $\|\cL_h\|=\|d_h\|_{\infty}$. This allows us to reduce the analysis of the rate function for the LDP of the Laplacian norm to the analysis of the rate function for the maximum degree, which is considerably easier. Assumption~\eqref{rbd1} implies that $v_r(x) \geq \delta d_r(x)$, $x \in [0,1]$ for some $\delta>0$ (via \eqref{vrdef}), and ensures that the integral in \eqref{scalpsir} is well-defined. 

\medskip\noindent
{\bf 3.}
Theorems~\ref{thm:mainalt}--\ref{thm:rfscalingalt} establish the upward LDP and identify basic properties of the rate function $\widehat\psi_r$. The decay of $\widehat\psi_r$ towards zero is \emph{quadratic}, which suggests that $\widehat{\lambda}_N/N$ satisfies a standard central limit theorem. The interpretation of \eqref{curvature} is that the curvature of $\widehat\psi_r$ at its unique zero $C_r$  is the \emph{inverse} of the variance of the associated central limit theorem, in line with standard folklore of large deviation theory (see \cite{B93}). Since $v_r(x)$ can be viewed as the variance of the empirical distribution of the degrees of the vertices with label $\approx xN$ and the large deviations are controlled by $x \in \cD_r$, the relation in \eqref{curvature} is intuitively plausible.   

\medskip\noindent
{\bf 4.}
Assumptions~\eqref{everywhere}--\eqref{infconv} are needed for the upward LDP because the rate is $N$ instead of $N^2$, which requires more delicate arguments. Assumption~\eqref{posdef} is similar to Assumption~\eqref{gap}, since it implies $\lambda_{\max}(\cL_r)\leq\|d_r\|_{\infty}$. Assumption~\eqref{posdef} is needed to make sure that for the upward LDP in Theorem~\ref{thm:mainalt} \emph{the largest degree is dominant}. In fact, we will see that $J(x,\beta)$ in \eqref{Jrdef} is the upward rate function for the degrees of the vertices with label $\approx xN$. Assumption~\eqref{rbd1} is needed to ensure that the general properties of the upward rate function in Theorem~\ref{thm:rfpropalt} hold, Assumption~\eqref{rct} is needed to get the sharp scaling of the upward rate function in Theorem~\ref{thm:rfscalingalt} as stated in \eqref{scal1alt}--\eqref{curvature}.

\medskip\noindent
{\bf 5.}
Note that every graphon of rank 1 satisfies Assumption~\eqref{posdef}. The same is not true for graphons of higher rank. Indeed, consider a block graphon corresponding to a graph that has an adjacency matrix with negative eigenvalues, such as $r(x,y) = \1_{[0,\frac{1}{2}]^2}(x,y) + \1_{[\frac{1}{2},1]^2}(x,y)$. Then $r$ is not non-negative definite.

\medskip\noindent
{\bf 6.}
Whereas the scaling constant in Theorem~\ref{thm:rfscalingalt} is sharp, it is not sharp in Theorem~\ref{thm:rfscaling}. The proof of Theorem~\ref{thm:rfscaling} shows that it lies between $1$ and $2$.

\medskip\noindent
{\bf 7.}
The fine properties of $\psi_r$ and $\widehat\psi_r$ remain elusive. Neither convexity nor analyticity are obvious.

\medskip\noindent
{\bf 8.}
In the proofs we derive an LDP for the maximum degree of $G_N$ and analyse its rate function. The results we obtain for the maximum degree are analogous to Theorems~\ref{thm:main}--\ref{thm:rfscalingalt} and are of independent interest. See Remark~\ref{rem:link}.

%%%%%% SECTION 2 %%%%%%%%%%%%%%%%%%%

\section{Proof of Proposition \ref{prop:basics}}
\label{sec:rf}

Section~\ref{sec:propL} shows that the degree operator is lower semi-continuous. Section~\ref{sec:proofprop} provides the proof of Proposition~\ref{prop:basics}.

%%%

\subsection{Lower semi-continuity}
\label{sec:propL}

\begin{lemma}
\label{lem:lsc}
The map $h\mapsto\|\cD_h\|$ is lower semi-continuous in the cut-metric.
\end{lemma}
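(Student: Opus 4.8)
The plan is to reduce the statement to the identity $\|\cD_h\|=\|d_h\|_\infty$ from \eqref{eq:degreenorm} and then prove the lower semi-continuity directly from the definitions, using that the cut-distance controls the integral of a graphon over any product set. Since $(\widetilde{\mathcal{W}},\delta_\square)$ is a metric space, it suffices to show that $\liminf_{n\to\infty}\|d_{h_n}\|_\infty\ge\|d_h\|_\infty$ whenever $\delta_\square(h_n,h)\to0$. The starting point is the elementary observation that, for every graphon $h$,
\[
\|d_h\|_\infty=\sup\Big\{\tfrac{1}{|A|}\int_A d_h\ :\ A\subseteq[0,1]\text{ measurable},\ |A|>0\Big\},
\]
which holds because $\tfrac{1}{|A|}\int_A d_h\le\|d_h\|_\infty$ trivially, while choosing $A$ to be a set of positive measure on which $d_h>\|d_h\|_\infty-\varepsilon$ (possible by the definition of the essential supremum) makes the average at least $\|d_h\|_\infty-\varepsilon$.

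To carry this out, fix $\varepsilon>0$, set $M=\|d_h\|_\infty$, $A=\{x:d_h(x)>M-\varepsilon\}$ and $a=|A|>0$. For each $n$ choose $\phi_n\in\mathcal{M}$ with $d_\square(h_n,h^{\phi_n})\le\delta_\square(h_n,h)+\tfrac1n=:\varepsilon_n$, so that $\varepsilon_n\to0$. Since $\phi_n$ is measure-preserving one has $d_{h^{\phi_n}}=d_h\circ\phi_n$, hence $\|d_{h^{\phi_n}}\|_\infty=M$, and $A_n:=\phi_n^{-1}(A)$ satisfies $|A_n|=a$ and $d_{h^{\phi_n}}>M-\varepsilon$ on $A_n$. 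Applying the definition \eqref{cutdistance} with $S=A_n$ and $T=[0,1]$ gives
\[
\Big|\int_{A_n}d_{h_n}-\int_{A_n}d_{h^{\phi_n}}\Big|=\Big|\int_{A_n\times[0,1]}\big(h_n-h^{\phi_n}\big)\Big|\le d_\square(h_n,h^{\phi_n})\le\varepsilon_n,
\]
so $\int_{A_n}d_{h_n}\ge(M-\varepsilon)\,a-\varepsilon_n$. Combining this with $\int_{A_n}d_{h_n}\le|A_n|\,\|d_{h_n}\|_\infty=a\,\|d_{h_n}\|_\infty$ yields $\|d_{h_n}\|_\infty\ge(M-\varepsilon)-\varepsilon_n/a$; letting $n\to\infty$ gives $\liminf_n\|d_{h_n}\|_\infty\ge M-\varepsilon$, and then $\varepsilon\downarrow0$ finishes the proof. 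Applying the same bound to constant sequences also shows $\|d_h\|_\infty$ is $\delta_\square$-invariant, so it is a well-defined function on $\widetilde{\mathcal{W}}$.

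The delicate point, and what I would flag as the main obstacle, is the bookkeeping with the maps in $\mathcal{M}$: one must extract both $|\phi_n^{-1}(A)|=|A|$ and $\|d_h\circ\phi_n\|_\infty=\|d_h\|_\infty$ from $\phi_n$ being measure-preserving alone, without invoking measurability or measure-preservation of the inverse map. Everything else is a single application of the cut-norm bound on the rectangle $A_n\times[0,1]$. It is worth noting, in line with Remark~\ref{rem:continuity}, that no matching upper bound holds, so this one-sided statement is the best one can hope for; it is one of the ingredients in the proof of Proposition~\ref{prop:basics}(i).
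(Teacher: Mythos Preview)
Your proof is correct and follows essentially the same strategy as the paper's: both pick a set $A$ of positive measure on which $d_h$ is within $\varepsilon$ of its essential supremum, and then use the cut-distance bound on the rectangle $A\times[0,1]$ to compare $\int_A d_{h_n}$ with $\int_A d_h$. The only cosmetic differences are that the paper argues by contradiction and absorbs the measure-preserving maps into a ``without loss of generality $d_\square(f_n,f)\to 0$'' step, whereas you give a direct argument and keep the $\phi_n$'s explicit; the point you flag about $\phi_n^{-1}$ is harmless, since both $|\phi_n^{-1}(A)|=|A|$ and $\|d_h\circ\phi_n\|_\infty=\|d_h\|_\infty$ follow immediately from $\phi_n$ being measure-preserving (the first is the definition, the second because $\{d_h\circ\phi_n>t\}=\phi_n^{-1}\{d_h>t\}$).
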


\begin{proof}
Let $(\ft_n)_{n\in\N}\subset\Wt$ be a sequence converging to some $\ft\in\Wt$ in the cut-metric $\delta_\square$. Without loss of generality we may assume that it converges in the cut-distance $d_\square$ as well, i.e., $\lim_{n\to\infty} d_{\square}(f_n,f) = 0$. Assume that
\begin{equation}
\label{wrongass}
\liminf_{n\to\infty} \|\cD_{f_n}\| < \|\cD_f\|.
\end{equation}
Then there exists a $\delta>0$ (independent of $n$) such that $\|\cD_f\|>\|\cD_{f_n}\|+\delta$ for infinitely many $n\in\N$. By \eqref{eq:degreenorm}, there exists a set $A\subset[0,1]$ of positive measure such that
\begin{equation}
d_f(x) > \|d_f\|_\infty -\tfrac{\delta}{2} = \|\cD_{f}\| - \tfrac{\delta}{2} > \|\cD_{f_n}\| + \tfrac{\delta}{2} \geq d_{f_n}(x) +\tfrac{\delta}{2}
\qquad \forall\,x\in A.
\end{equation}
Hence, by \eqref{degreefunction},
\begin{equation}
\int_{A\times[0,1]} \dx\dy\,f(x,y) > \int_{A\times[0,1]} \dx\dy\,f_n(x,y) + \tfrac{\delta}{2}\lambda(A)
\end{equation}
with $\lambda$ the Lebesgue measure. By the definition of the cut-distance in \eqref{cutdistance},
\begin{equation}
d_{\square}(f_n,f) \geq \int_{A\times[0,1]} \dx\dy\,[f(x,y)-f_n(x,y)] > \tfrac{\delta}{2}\lambda(A).
\end{equation}
Since $\lambda(A)$ and $\delta$ are independent of $n$, this inequality is a contradiction with the fact that $\lim_{n\to\infty} d_\square(f_n,f) = 0$. Hence \eqref{wrongass} is impossible.
\end{proof}

%%%

\subsection{Proof of Proposition~\ref{prop:basics}}
\label{sec:proofprop}

\begin{proof}
(i) The first part of the statement follows from the fact that $\cL_h$ is the sum of two bounded operators. For the second part of the statement, we need that $h\mapsto\|\cT_h\|$ is continuous and that $h\mapsto\|\cD_h\|$ is lower semi-continuous. The former was shown in \cite[Lemma 3.6]{LZ16}, the latter was shown in Lemma~\ref{lem:lsc}. 

Let $g\in L^2([0,1]^2)$. Then, by \eqref{Laplaciandef},
\begin{equation}
\begin{split}
&\liminf_{n\to\infty} \big[\|\cL_{f_n}(g)\|_2^2 - \|\cL_{f}(g)\|_2^2\big]\\
&\geq \liminf_{n\to\infty} \big[\|\cD_{f_n}(g)\|_2^2-\|\cD_f(g)\|_2^2\big] 
+ \liminf_{n\to\infty} \big[\|T_{f_n}(g)\|_2^2-\|T_f(g)\|_2^2\big]\\
&\qquad -2 \limsup_{n\to\infty} \big[\langle \cD_{f_n}(g),T_{f_n}(g)\rangle-\langle \cD_{f}(g),T_f(g)\rangle\big]\\
&\geq -2 \limsup_{n\to\infty} \big[\langle \cD_{f_n}(g),T_{f_n}(g)\rangle-\langle \cD_{f}(g),T_f(g)\rangle\big].
\end{split}
\end{equation}
It remains to show that the last expression equals 0. Since the simple functions are dense in $L^2([0,1]^2)$, we may assume without loss of generality that $g$ is simple, i.e., $g=\sum_{i=1}^k\alpha_i\1_{A_i}$. Estimate
\begin{equation}
\begin{split}
&\big|\langle \cD_{f_n}(g),T_{f_n}(g)\rangle-\langle \cD_{f}(g),T_f(g)\rangle\big|\\
&= \left|\int_{[0,1]^3} \dy\dy'\dx\, f_n(x,y)f_n(x,y')g(x)g(y)-\int_{[0,1]^3}\dy\dy'\dx\, f(x,y)f(x,y')g(x)g(y)\right|\\
&= \left|\int_{[0,1]^3}\dy\dy'\dx\, [f_n(x,y)f_n(x,y')-f(x,y)f(x,y')]\,g(x)g(y)\right|\\
&= \left|\sum_{1 \leq i,j \leq k} \alpha_i\alpha_j \int_{[0,1]^3}\dy\dy'\dx\,[f_n(x,y)f_n(x,y')-f(x,y)f(x,y')]\,
\1_{A_i}(x)\1_{A_j}(y)\right|\\
&\leq \sum_{1 \leq i,j \leq k} |\alpha_i\alpha_j| \left(\left|\int_{[0,1]^3}\dy\dx\dy'\, f_n(x,y')[f_n(x,y)-f(x,y)]\,
\1_{A_i}(x)\1_{A_j}(y)\right|\right.\\
&\qquad \qquad \qquad \qquad +\left.\left|\int_{[0,1]^3}\dy'\dx\dy\, f(x,y)[f_n(x,y')-f(x,y')]\,\1_{A_i}(x)\1_{A_j}(y)\right|\right).
\end{split}
\end{equation}
Note that, for every $y'\in[0,1]$, $0\leq f_n(x,y')\1_{A_i}(x),\1_{A_j}(y)\leq1$ and, for every $y\in[0,1]$, $0\leq f(x,y)\1_{A_i}(x)\1_{A_j}(y)\leq1$. Hence, by the definition of the cut-distance, the above expression is bounded from above by
\begin{equation}
\begin{split}
\sum_{1 \leq i,j \leq k} |\alpha_i\alpha_j|\,2d_{\square}(f_n,f),
\end{split}
\end{equation}
which tends to zero as $n\to\infty$.

We have thus shown that $\liminf_{n\to\infty} \|\cL_{f_n}(g)\|_2 \geq \|\cL_{f}(g)\|_2$ for all simple functions $g\in L^2([0,1])$. Now let $\varepsilon>0$. Then there exists a simple $g'\in L^2([0,1])$ such that $\|\cL_f\|<\|\cL_f(g)\|+\varepsilon$. By the previous result,
\begin{equation}
\liminf_{n\to\infty} \|\cL_{f_n}\| \geq \liminf_{n\to\infty} \|\cL_{f_n}(g')\|_2\geq\|\cL_f(g')\| > \|\cL_f\|-\varepsilon.
\end{equation}
Since this holds for arbitrary $\varepsilon>0$, the proof is complete.

\medskip\noindent
(ii) Let $G$ be a graph with $N$ vertices. We prove that $\frac{1}{N}L_N$ and $\cL_{h^{G}}$ have the same eigenvalues. First, let $u=(u_1,\ldots,u_N)$ be an eigenvector of $\frac{1}{N}L_N$ with eigenvalue $\lambda$. Let $u\colon[0,1]\to\R$, and define $\overline{u}\colon[0,1]\to\R$ to be the step function that on $[\frac{i-1}{N},\frac{i}{N})$, $1\leq i\leq N$, equals the average of $u$ over that interval. Then $u$ is an eigenfunction of $\cL_{h^{G}}$ with eigenvalue $\lambda$ if and only if, for all $1\leq j\leq N$ and $x\in[\frac{j-1}{N},\frac{j}{N})$,
\begin{equation}
(\cL_{h^{G}}u)(x) = \frac{1}{N} \sum_{i \in [N] \setminus j} A_N(i,j)u(x)-\frac{1}{N}
\sum_{i \in [N] \setminus j} A_N(i,j)\overline{u}_i=\lambda u(x),
\end{equation}
i.e.,
\begin{equation}
u(x)=\left(\frac{1}{N} \sum_{i \in [N] \setminus j} A_N(i,j) - \lambda\right)^{-1}\frac{1}{N}
\sum_{i \in [N] \setminus j} A_N(i,j)\overline{u}_i.
\end{equation}
Similarly, $v=(v_1,\ldots,v_N)$ is an eigenvector of $\frac{1}{N}L_N$ with eigenvalue $\lambda$ if and only if
\begin{equation}
v_j=\left(\frac{1}{N}\sum_{i \in [N] \setminus j} A_N(i,j) - \lambda\right)^{-1}\frac{1}{N}
\sum_{i \in [N] \setminus j} A_N(i,j)v_i.
\end{equation}
Since every eigenfunction of $\cL_{h^{G}}$ is constant on the blocks $[\frac{j-1}{N},\frac{j}{N})$, $1 \leq j \leq N$, we can view each eigenfunction of $\cL_{h^{G}}$ as an eigenvector of $\frac{1}{N}L_N$, and vice versa.

Finally, note that $\cL_{h^{G}}$ is a normal operator and has a non-negative spectrum. So the norm of $\cL_{h^{G}}$ equals the supremum of its spectrum. Furthermore, the essential spectrum of $\cL_{h^{G}}$ equals the range of the degree function $d_{h^{G}}(x)$ \cite[Proposition 5.11]{DGKR16}. Since the largest eigenvalue of the Laplacian matrix is bounded from below by the maximum degree \cite{GM94}, it follows that the largest eigenvalue of $\cL_{h^{G_N}}$ equals its norm.
\end{proof}

%%%

\section{Proof of theorems for downward LDP}

Sections~\ref{sec:proofmain}--\ref{sec:proofrfscaling} provide the proof of Theorems~\ref{thm:main}--\ref{thm:rfscaling}, respectively. 

%%% 

\subsection{Proof of Theorem \ref{thm:main}}
\label{sec:proofmain}

%%%

\subsubsection{Upper bound}

By Proposition~\ref{prop:basics}(i), $h \mapsto \|\cL_h\|$ is lower semi-continuous. Hence the set $\{\widetilde{h} \in \widetilde{\mathcal{W}}\colon\,\|\cL_{\widetilde{h}}\| \leq \beta\}$ is closed in $\widetilde{\mathcal{W}}$. By Proposition~\ref{prop:basics}(ii), $\frac{\|L_N\|}{N} = \|\cL_{h^{G_N}}\|$, and so we can use Theorem~\ref{thm:LDPinhom} in combination with the contraction principle \cite{H00} to get
\begin{equation}
\limsup_{N\to\infty} \binom{N}{2}^{-1} \log \mathbb{P}_N\left(\frac{\|L_N\|}{N} \leq \beta\right) 
\leq - \inf_{ \substack{\widetilde{h} \in \widetilde{\mathcal{W}}\\ \|\cL_{\widetilde{h}}\| \leq \beta} } J_r(\widetilde{h})
= - \inf_{ \substack{h \in \mathcal{W}\\ \|\cL_{h}\| \leq \beta} } I_r(h).
\end{equation}  

%%%

\subsubsection{Lower bound}

The proof proceeds via a change of measure. The key is the following lemma.

\begin{lemma}
\label{lem:cont}
Let $h$ be a graphon satisfying \eqref{Assbasic} and $(h_n)_{n\in\N}$ a sequence of block graphons satisfying $\|h_N-h\|_\infty\to0$. Let $h^{G_N}$ be the empirical graphon corresponding to the inhomogeneous Erd\H{o}s-R\'enyi random graph $G_N$ with reference graphon $h_N$. Then $\lim_{N\to\infty} \|\cL_{h^{G_N}}\| = \|\cL_h\|$ in probability.
\end{lemma}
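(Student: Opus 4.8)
The plan is to show that $\|\cL_{h^{G_N}}\|$ concentrates around $\|\cL_{h_N}\|$, and then argue that $\|\cL_{h_N}\| \to \|\cL_h\|$ using the hypothesis $\|h_N-h\|_\infty \to 0$. For the second point, note that $\|\cL_{f}-\cL_{g}\| \le \|\cD_f - \cD_g\| + \|\cT_f - \cT_g\| \le 2\|f-g\|_\infty$ (the degree-operator difference is a multiplication operator with symbol $d_f - d_g$, bounded by $\|f-g\|_\infty$, and $\|\cT_f-\cT_g\|$ is the operator norm of an integral operator with kernel $f-g$, also bounded by $\|f-g\|_\infty$), so $h_N \to h$ in $\|\cdot\|_\infty$ forces $\|\cL_{h_N}\| \to \|\cL_h\|$ deterministically. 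Thus it suffices to prove $\|\cL_{h^{G_N}}\| - \|\cL_{h_N}\| \to 0$ in probability. By $\|\cL_{h^{G_N}}\| = \|L_N\|/N$ (Proposition~\ref{prop:basics}(ii)) and $\|\cL_{h_N}\|$ being the norm of the Laplacian of the "averaged" kernel, this is a concentration statement for the spectral norm of the random Laplacian matrix around that of its mean.

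The concrete strategy I would use: write $L_N = \EE[L_N] + (L_N - \EE[L_N])$, where $\EE[L_N]$ is the Laplacian-type matrix built from the probabilities $(h_N)_{ij}$ (up to the diagonal, its spectral norm divided by $N$ is $\|\cL_{h_N}\|$ plus an $O(1/N)$ correction coming from self-loops/normalisation). Then by the triangle inequality for operator norms,
\begin{equation}
\left| \frac{\|L_N\|}{N} - \|\cL_{h_N}\| \right| \le \frac{\|L_N - \EE[L_N]\|}{N} + o(1).
\end{equation}
Now $L_N - \EE[L_N] = (D_N - \EE[D_N]) - (A_N - \EE[A_N])$. The centred adjacency part $A_N - \EE[A_N]$ is a symmetric matrix with independent, bounded, centred entries, so by standard results on the norm of inhomogeneous random matrices (e.g. the bound $\EE\|A_N - \EE A_N\| = O(\sqrt{N})$ for entries in $[0,1]$, via the non-commutative Khintchine / Bandeira–van Handel estimates, or simply a crude $\varepsilon$-net plus Bernstein bound) one gets $\|A_N - \EE A_N\|/N \to 0$ in probability. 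The centred degree part $D_N - \EE D_N$ is diagonal, with entries $\sum_{j\neq i}(A_N(i,j) - (h_N)_{ij})$, each a sum of $N-1$ independent centred bounded variables; by Bernstein's inequality each is $O(\sqrt{N\log N})$ with probability $1 - o(N^{-1})$, and a union bound over the $N$ diagonal entries gives $\|D_N - \EE D_N\| = \max_i |\cdots| = O(\sqrt{N\log N})$ with high probability, hence $\|D_N - \EE D_N\|/N \to 0$. Combining, $\|L_N - \EE L_N\|/N \to 0$ in probability, which finishes the argument.

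The main obstacle is getting a clean, citable bound on $\|A_N - \EE A_N\|$ of order $o(N)$ in probability that applies to inhomogeneous Bernoulli entries — this is where one must be careful, since the entries are not identically distributed. A safe route is the $\varepsilon$-net argument: fix a $\tfrac14$-net $\mathcal{N}$ of the unit sphere of size $\le 9^N$, note $\|M\| \le 2\max_{u,v\in\mathcal{N}} \langle Mu,v\rangle$ for symmetric $M$, and for each fixed pair $(u,v)$ the quantity $\langle (A_N - \EE A_N)u, v\rangle$ is a sum of independent centred terms bounded in absolute value by $|u_iv_j|$, so Hoeffding/Bernstein gives a sub-Gaussian tail with variance proxy $O(1)$; choosing the deviation level $t = \eta N$ beats the $9^{2N}$ union bound for any fixed $\eta > 0$, yielding $\PP(\|A_N - \EE A_N\| \ge \eta N) \le e^{-cN}$. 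One should also double-check the $O(1/N)$ discrepancy between $\|\EE L_N\|/N$ and $\|\cL_{h_N}\|$: the former's diagonal has entries $\sum_{j\neq i}(h_N)_{ij} = N d_{h_N}(\tfrac{i}{N}) + O(1)$ while $\cL_{h_N} = \cD_{h_N} - \cT_{h_N}$ acting on step functions has the exact matrix $\mathrm{Diag}(d_{h_N}) - \tfrac1N(h_N)_{ij}$; matching these up to the block-averaging and the single missing diagonal term is routine and contributes only $o(1)$ after dividing by $N$.
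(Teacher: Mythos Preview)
Your proposal is correct and structurally matches the paper's proof: both decompose $\cL_{h^{G_N}}-\cL_h$ into a degree part and an adjacency part, handle the degree part by Hoeffding plus a union bound over the $N$ vertices (exactly your $\|D_N-\EE D_N\|$ step), and use $\|h_N-h\|_\infty\to0$ for the deterministic piece. The one tactical difference is in the adjacency part: the paper invokes the known inequality $\|\cT_f-\cT_g\|\le\sqrt{2}\,d_\square(f,g)$ from \cite{LZ16} together with convergence of the empirical graphon to $h$ in cut distance, whereas you bound $\|A_N-\EE A_N\|$ directly via an $\varepsilon$-net and Hoeffding. Both arguments give $\|A_N-\EE A_N\|/N\to0$ in probability; the paper's route is shorter by citation, yours is self-contained and actually yields the sharper order $O(\sqrt{N})$ rather than merely $o(N)$. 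Your bookkeeping about the $O(1/N)$ discrepancy between $\|\EE L_N\|/N$ and $\|\cL_{h_N}\|$ is not needed in the paper's version because it compares $\cL_{h^{G_N}}$ directly with $\cL_h$ at the operator level rather than passing through matrix expectations.
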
   

\begin{proof}
We prove the stronger statement $\lim_{N\to\infty} \|\cL_{h^{G_N}}-\cL_h\| = 0$. Since $\cL_{h}$ is the sum of $\cD_{h}$ and $-\cT_h$ (recall \eqref{Laplaciandef}), it suffices to prove the claim for these two operators separately.

To show that $\cD_{h^{G_N}}$ converges to $\cD_h$ in operator norm in probability, let $d_{h_N}$ be the degree function of the block graphon $h_N$. Then
\begin{equation}
\begin{split}
\|d_{h^{G_N}}-d_h\|_\infty \leq \|d_{h^{G_N}}-d_{h_N}\|_\infty+\|d_{h_N}-d_h\|_\infty.
\end{split}
\end{equation}
Because $\|h_N-h\|_\infty \downarrow 0$ as $N\to\infty$, the second term vanishes. As for the first term,
\begin{equation}
\begin{aligned}
&\P_N(\|d_{h^{G_N}}-d_{h_N}\|_\infty\geq t)\\
&\leq \sum_{i \in [N]} \P_N\left(\left|\frac{1}{N} \sum_{i \in [N] \setminus j}^N A_N(i,j)
-\frac{1}{N}\sum_{i \in [N] \setminus j}^N h_{N,ij}\right|\geq t\right) \leq 2N \eee^{-2Nt^2} \downarrow 0
\end{aligned}
\end{equation}
by a straightforward application of Hoeffding's inequality. We use that $A_N(i,j)$ has Bernoulli distribution with mean $(h_N)_{ij}$. Hence $\|d_{h^{G_N}}-d_{h_N}\|_\infty \to 0$ as $N\to\infty$ in probability, and so
\begin{equation}
\begin{split}
\|\cD_{h^{G_N}}-\cD_h\|=\|d_{h^{G_N}}-d_r\|_\infty \to  0 \text{ in probability}.
\end{split}
\end{equation}

To show that $T_{h^{G_N}}$ converges to $T_h$ in operator norm in probability, it suffices to note that
\begin{equation}
\|T_{h^{G_N}}-T_{h}\| \leq \sqrt{2}\, d_\square(h^{G_N},h) \downarrow 0 \quad \text{ as }N\to\infty \text{ in probability}. 
\end{equation}
The first inequality was shown in \cite[Lemma 3.6]{LZ16}. Convergence in the cut-metric was shown in \cite[Lemma 5.11]{chatterjeeln2017}.
\end{proof}

We are now ready to prove the lower bound.

\begin{proposition}\label{lemma:lowerboundforcontinuous}
Let $\beta\in[0,C_r]$, and let $h\in\W$ be such that $\|\cL_h\|<\beta$. Then
\begin{equation}
\liminf_{N\to\infty} \frac{2}{N^2} \log\P_N(\|\cL_{h^{G_N}}\|<\beta)\geq -I_r(h).
\end{equation}
\end{proposition}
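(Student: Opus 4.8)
My plan is an exponential-tilting (change-of-measure) argument that re-derives this lower bound directly. This is necessary because the target event $E_N:=\{\|\cL_{h^{G_N}}\|<\beta\}$ is not open in the cut topology---indeed $h$ is never an interior point of $\{\|\cL_\cdot\|<\beta\}$, since a cut-negligible star-type perturbation of $h$ raises $\|\cL_\cdot\|$ above $\beta$ (cf.\ Remark~\ref{rem:continuity})---so the open-set bound in Theorem~\ref{thm:LDPinhom} gives nothing.

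First I would reduce to $h$ bounded away from $0$ and $1$. Since $\mathcal{R}(a\mid b)\le|\log b|+|\log(1-b)|+\log 2$, one always has $I_r(h)<\infty$; replacing $h$ by its truncation $h^{(\epsilon)}\in[\epsilon,1-\epsilon]$ gives $\|h^{(\epsilon)}-h\|_\infty\le\epsilon$, hence $\|\cL_{h^{(\epsilon)}}-\cL_h\|\le2\epsilon$ (using $\|\cD_f-\cD_g\|\le\|f-g\|_\infty$ and $\|\cT_f-\cT_g\|\le\|f-g\|_2$), so $\|\cL_{h^{(\epsilon)}}\|<\beta$ for small $\epsilon$, while $I_r(h^{(\epsilon)})\to I_r(h)$ by dominated convergence; as $E_N$ does not depend on $h$, it suffices to prove the bound for each $h^{(\epsilon)}$ and let $\epsilon\downarrow0$. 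For such $h$, fix block graphons $h_N\in[\epsilon,1-\epsilon]$ with $\|h_N-h\|_\infty\to0$, let $\mathbb{Q}_N$ be the law of the inhomogeneous ERRG with reference graphon $h_N$, and put $R_N=\mathrm d\mathbb{P}_N/\mathrm d\mathbb{Q}_N$. Then $\log R_N=\sum_{i<j}\bigl[A_{ij}\log\tfrac{(r_N)_{ij}}{(h_N)_{ij}}+(1-A_{ij})\log\tfrac{1-(r_N)_{ij}}{1-(h_N)_{ij}}\bigr]$ is a sum of independent terms with $\mathbb{E}_{\mathbb{Q}_N}[\log R_N]=-\sum_{i<j}\mathcal{R}\bigl((h_N)_{ij}\mid(r_N)_{ij}\bigr)=-\tfrac{N^2}{2}I_{r_N}(h_N)+o(N^2)$ (the diagonal blocks contributing $o(N^2)$ by uniform integrability of $\log r_N,\log(1-r_N)$), and $I_{r_N}(h_N)\to I_r(h)$ by \eqref{Assbasic2} and dominated convergence.

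By Lemma~\ref{lem:cont}, under $\mathbb{Q}_N$ one has $\|\cL_{h^{G_N}}\|\to\|\cL_h\|<\beta$ in probability, so $\mathbb{Q}_N(E_N)\to1$. Since $\log R_N$ is a sum of independent increments, $\mathrm{Var}_{\mathbb{Q}_N}(\log R_N)\le\tfrac14\sum_{i<j}\bigl(\log\tfrac{(r_N)_{ij}(1-(h_N)_{ij})}{(h_N)_{ij}(1-(r_N)_{ij})}\bigr)^2=O(N^2)$ when $r$ is bounded away from $0$ and $1$ (in general one first cuts out the extreme blocks of $r_N$), so Chebyshev gives $\mathbb{Q}_N(B_N)\to1$ for $B_N=\{|\log R_N-\mathbb{E}_{\mathbb{Q}_N}\log R_N|\le\delta N^2\}$. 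On $E_N\cap B_N$ one has $R_N\ge\exp(\mathbb{E}_{\mathbb{Q}_N}\log R_N-\delta N^2)$, so
\[
\mathbb{P}_N(E_N)=\mathbb{E}_{\mathbb{Q}_N}\bigl[R_N\mathbf{1}_{E_N}\bigr]\ge\mathbb{Q}_N(E_N\cap B_N)\,\exp\bigl(\mathbb{E}_{\mathbb{Q}_N}\log R_N-\delta N^2\bigr);
\]
taking $\tfrac{2}{N^2}\log$, using $\mathbb{Q}_N(E_N\cap B_N)\to1$ and $\tfrac{2}{N^2}\mathbb{E}_{\mathbb{Q}_N}\log R_N\to-I_r(h)$, and then $\delta\downarrow0$, yields $\liminf_N\tfrac{2}{N^2}\log\mathbb{P}_N(E_N)\ge-I_r(h)$.

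The main obstacle is obtaining $\mathbb{Q}_N(E_N)\to1$ robustly: via Lemma~\ref{lem:cont} this requires block graphons $h_N$ converging to $h$ in the \emph{supremum} norm, and such $h_N$ exist only when $d_h$ is regular (e.g.\ $h$ continuous)---a naive choice like the $N\times N$ block average makes $\|\cL_{h_N}\|$ accumulate somewhere in $[\|\cD_h\|,\|\cD_h\|+\|\cT_h\|]$ rather than at $\|\cL_h\|$, precisely because $h\mapsto\|\cL_h\|$ is only lower semi-continuous (Remark~\ref{rem:continuity}). Handling a general $h\in\W$ therefore forces an additional approximation step at the level of the rate function---replacing $h$ by a sup-norm-block-approximable graphon with nearly the same $I_r$ and still with $\|\cL_\cdot\|<\beta$---a complication absent for the adjacency matrix in \cite{CHHS20}, where $h\mapsto\|\cT_h\|$ is continuous. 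A secondary technical point is that under the bare assumption \eqref{Assbasic} the variance bound needs the reference graphon truncated on its extreme blocks, with the induced perturbations of $\|\cL_{h_N}\|$ and of $\sum_{i<j}\mathcal{R}((h_N)_{ij}\mid(r_N)_{ij})$ controlled.
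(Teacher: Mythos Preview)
Your change-of-measure core is correct and is exactly the paper's Step~1: fix block graphons $h_N$ with $\|h_N-h\|_\infty\to0$, invoke Lemma~\ref{lem:cont} to get $\mathbb{Q}_N(E_N)\to1$, and tilt back. The paper uses Jensen on $\log R_N$ rather than your Chebyshev/variance route, but this is cosmetic; Jensen avoids your truncation of $h$ and the separate handling of extreme blocks of $r_N$, since the conditional expectation of $\tfrac{2}{N^2}\log\tfrac{\mathrm d\mathbb{Q}_N}{\mathrm d\mathbb{P}_N}$ over $E_N$ is controlled directly by $I_{r_N}(h_N)\to I_r(h)$.

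The genuine gap is your last paragraph: you correctly flag that Lemma~\ref{lem:cont} needs $\|h_N-h\|_\infty\to0$ and hence only applies to sup-approximable $h$, but you then stop, and your diagnosis of the obstruction is off. You assert that the $N$-block average $\bar h_N$ may have $\|\cL_{\bar h_N}\|$ accumulating above $\|\cL_h\|$; in fact the opposite inequality $\|\cL_{\bar h_N}\|\le\|\cL_h\|$ always holds, and this is the key idea you are missing. The paper proves it via the quadratic-form identity $\|\cL_f\|=\tfrac12\sup_{\|u\|_2=1}\int f(x,y)[u(x)-u(y)]^2$: if $\|\cL_{\bar h_N}\|=\|d_{\bar h_N}\|_\infty$ one uses $\|d_{\bar h_N}\|_\infty\le\|d_h\|_\infty\le\|\cL_h\|$, while if $\|\cL_{\bar h_N}\|>\|d_{\bar h_N}\|_\infty$ the norm is attained at a block-constant eigenfunction $u$, for which $\langle\cL_{\bar h_N}u,u\rangle=\langle\cL_h u,u\rangle\le\|\cL_h\|$. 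The same quadratic form also gives pointwise monotonicity: $h_1\le h_2$ implies $\|\cL_{h_1}\|\le\|\cL_{h_2}\|$.

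With these two monotonicity facts the approximation to general $h$ is straightforward, and the paper does it in three steps: (1) continuous $h$ by your tilting argument; (2) block graphons, by approximating from \emph{below} by continuous $h_k\le h$ in $L^2$, so $\|\cL_{h_k}\|\le\|\cL_h\|<\beta$ and $I_r(h_k)\to I_r(h)$; (3) arbitrary $h$, by passing to block averages $\bar h_M$, using $\|\cL_{\bar h_M}\|\le\|\cL_h\|<\beta$ and again $I_r(\bar h_M)\to I_r(h)$ by $L^2$-continuity of $I_r$. Note that one never needs $\|\cL_{\bar h_M}\|\to\|\cL_h\|$, only the one-sided bound; this is why the failure of continuity in Remark~\ref{rem:continuity} is harmless here.
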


\begin{proof}
The proof comes in three steps.

\medskip\noindent
{\bf 1.}  We begin by giving the proof for the case when $h$ is a \emph{continuous} graphon. Denote the law of an inhomogeneous ERRG with reference graphon $r_N$ by $\P_{N,r_N}$ instead of $\P_{N}$, and the law of an inhomogeneous ERRG with reference graphon $h_N$ by $\P_{N,h_N}$, where $h_N$ is the block graphon that is obtained by averaging $h$ over blocks of size $1/N$. Then, by continuity and hence uniform continuity of $h$, $\|h_N-h\|_\infty \downarrow 0$.

Let $x=\|\cL_h\|$, and define
\begin{equation}
U_\varepsilon^x = \{f\in\W\mid\|\cL_f\|\in(x-\varepsilon,x+\varepsilon)\}.
\end{equation}
By Lemma~\ref{lem:cont},
\begin{equation}
\label{eq:convergenceprobability}
\P_{N,h_N}(U_\varepsilon^x ) \uparrow 1, \qquad N\to\infty,
\end{equation}
for all $\varepsilon>0$. Write
\begin{equation}
\begin{split}
\P_{N,r_N}(U_\varepsilon^x) = \P_{N,h_N}(U_\varepsilon^x)\,\frac{1}{\P_{N,h_N}(U_\varepsilon^x)}
\int_{U_\varepsilon^x}\exp\left(-\log\frac{\d \P_{N,h_N}}{\d\P_{N,r_N}}\right)\,\d\P_{N,h_N}.
\end{split}
\end{equation}
By Jensen's inequality,
\begin{equation}
\log \P_{N,r_N}(U_\varepsilon^x) \geq \log\P_{N,h_N}(U_\varepsilon^x)
-\frac{1}{\P_{N,h_N}(U_\varepsilon^x)}\int_{U_\varepsilon^x} \left(\log\frac{\d\P_{N,h_N}}{\d\P_{N,r_N}}\right) \d\P_{N,h_N}.
\end{equation}
Using \eqref{eq:convergenceprobability} and \cite[Lemma 5.7]{chatterjeeln2017}, we obtain
\begin{equation}
\liminf_{N\to\infty} \frac{2}{N^2} \log\P_{N,r_N}(U_\varepsilon^x)
\geq - \lim_{N\to\infty} \frac{2}{N^2} \int_{U_\varepsilon^x} \left(\log\frac{\d\P_{N,h_N}}{\d\P_{N,r_N}}\right) \d\P_{N,h_N}
=-I_r(h).
\end{equation}
Note that \cite[Lemma 5.7]{chatterjeeln2017} was stated for the homogeneous ERRG, but the proof may be extended to the inhomogeneous ERRG (see \cite[Theorem 6.1]{M20thesis}). Because $U_\varepsilon^x\subseteq\{f\in\W\mid\|\cL_f\|<\beta\}$ for $\varepsilon>0$ small enough, we conclude that
\begin{equation}
\liminf_{N\to\infty}\frac{2}{N^2} \log\P_{N,r_N}(\|\cL_{h^{G_N}}\|<\beta) \geq -I_r(x).
\end{equation}

\medskip\noindent
{\bf 2.} We next extend the proof to the case where $h$ is a \emph{block graphon} such that $\|\cL_h\|<\beta$. Assume that $h$ is constant on the blocks $[\frac{i-1}{N},\frac{i}{N})\times[\frac{j-1}{N},\frac{j}{N})$, $1\leq i,j\leq N$. Then there exists a sequence of continuous graphons $(h_k)_{k\in\N}$ that converges to $h$ in $L^2$ with $h_k\leq h$. Note that, for $f\in\W$,
\begin{equation}\label{eq:normincreasing}
\begin{split}
&\|\cL_f\|\\ 
&=\sup_{\|u\|_2\leq1}\langle g,\cL_f(u)\rangle = \sup_{\|u\|_2\leq1}\int_{[0,1]^2}\d y\,\d x\,u(x)f(x,y) [u(x)-u(y)]\\
&= \tfrac{1}{2}\sup_{\|u\|_2\leq1} \left[\int_{[0,1]^2}\d y\,\d x\,u(x)f(x,y)[u(x)-u(y)]
+\int_{[0,1]^2}\d y\,\d x\,u(y)f(x,y)[u(y)-u(x)]\right]\\
&= \tfrac{1}{2} \sup_{\|u\|^2\leq1}\int_{[0,1]^2}\d x\,\d y\, f(x,y)[u(x)-u(y)]^2,
\end{split}
\end{equation}
where we use that $f$ is a symmetric function. From the above formula it is immediate that, since $h_k\leq h$, we have $\|\cL_{h_k}\| \leq \|\cL_{h}\|<\beta$. Then, by Lemma \ref{lemma:lowerboundforcontinuous},
\begin{equation}
\liminf_{N\to\infty} \frac{2}{N^2} \log\P_N(\|\cL_{h^{G_N}}\|<\beta)\geq -I_r(h_k), \qquad k \geq k_0.
\end{equation}
We conclude by noting that $I_r$ is continuous in $L^2([0,1]^2)$, implying that $I_r(h_k)$ converges to $I_r(h)$ as $k\to\infty$. 

\medskip\noindent
{\bf 3.} We finally extend the proof to the case when $h$ is an \emph{arbitrary} graphon such that $\|\cL_h\|<\beta$. Let $\overline{h}_N$ be the $N$-block approximant of $h$ for some $N$. Then $\overline{h}_N$ converges to $h$ in $L^2$ as $N\to\infty$ \cite[Proposition 2.6]{chatterjeeln2017}. Again, it suffices to prove
\begin{equation}
\|\cL_{\overline{h}_N}\| \leq \|\cL_{h}\|.
\end{equation}
First, suppose that $\|\cL_{\overline{h}_N}\| = \|d_{\overline{h}_N}\|_{\infty}$. For $x\in[\frac{i-1}{N},\frac{i}{N})$,
\begin{equation}
d_{\overline{h}_N}(x) = N \int_{[\frac{i-1}{N},\frac{i}{N})}\d x\, h(x)\leq\|d_h\|_{\infty}.
\end{equation}
It follows that $\|\cL_{\overline{h}_N}\| = \|d_{\overline{h}_N}\|_{\infty} \leq \|d_{h}\|_{\infty} \leq \|\cL_h\|$. Next, suppose that $\|\cL_{\overline{h}_N}\| > \|d_{\overline{h}_N}\|_{\infty}$. Then $\|\cL_{\overline{h}_N}\| = \lambda_{\max}(\cL_{\overline{h}_N})$ and there exists an eigenfunction $u$ of $\cL_h$ with $\|u\|_2=1$ such that $\langle\cL_{\overline{h}_N}u,u\rangle = \|\cL_{\overline{h}_N}\|$. In the proof of Proposition \ref{prop:basics} it was shown that $u$ is constant on each of the intervals $[\frac{i-1}{N},\frac{i}{N})$, $1 \leq i \leq N$. Let $u_i$ be the value of $u$ on $[\frac{i-1}{N},\frac{i}{N})$, $1 \leq i \leq N$. Then
\begin{equation}
\begin{split}
\langle\cL_{h}u,u\rangle =& \int_{[0,1]^2}\dx\,\dy\, h(x,y)[u(x)-u(y)]u(x)\\
=&\sum_{i,j=1}^N \int_{[\frac{i-1}{N},\frac{i}{N})\times[\frac{j-1}{N},\frac{j}{N})}\dx\,\dy\, h(x,y)[u_i-u_j]u_i\\
=&\sum_{i,j=1}^N \int_{[\frac{i-1}{N},\frac{i}{N})\times[\frac{j-1}{N},\frac{j}{N})}\dx\,\dy\, \overline{h}_N(x,y)
[u_i-u_j]u_i=\langle\cL_{\overline{h}_N}u,u\rangle.
\end{split}
\end{equation}
The desired result now follows from the fact that $\|\cL_h\| = \sup_{\|v\|_2=1}\langle\cL_hv,v\rangle \geq \langle\cL_{h}u,u\rangle = \langle\cL_{\overline{h}_N}u,u\rangle = \|\cL_{\overline{h}_N}\|$.
\end{proof}

From Proposition \ref{lemma:lowerboundforcontinuous}, we obtain the lower bound
\begin{equation}
\liminf_{N\to\infty} \frac{2}{N^2} \log\P_N(\|\cL_{h^{G_N}}\|<\beta)\geq -\inf_{\substack{h\in\mathcal{W}\\ \|\cL_h\|<\beta}}I_r(h).
\end{equation}
So it just remains to show
\begin{equation}\label{eq:variationalformulaequality}
\inf_{\substack{h\in\mathcal{W}\\ \|\cL_h\|<\beta}}I_r(h)=\inf_{\substack{h\in\mathcal{W}\\ \|\cL_h\|\leq\beta}}I_r(h).
\end{equation}
Let $h\in\mathcal{W}$ with $\|\cL_h\|=\beta$. Then $\|\cL_{(1-\varepsilon)h}\|=(1-\varepsilon)\beta<\beta$ for all $\varepsilon>0$. Furthermore, $(1-\varepsilon)h$ converges to $h$ in $L^2$ as $\varepsilon\downarrow0$ and $I_r$ is continuous in $L^2$, so $I_r((1-\varepsilon)h)\to I_r(h)$ as $\varepsilon\downarrow0$. Hence, the left-hand side of \eqref{eq:variationalformulaequality} is bounded from above by the right-hand side. The converse inequality is trivial, which settles the proof of the lower bound.

%%%

\subsection{Proof of Theorem~\ref{thm:rfprop}}
\label{sec:proofrfprop}

\begin{proof}
(i) Recall from \eqref{eq:normincreasing} in the proof of Proposition \ref{lemma:lowerboundforcontinuous} that if $h_1\leq h_2$, then $\|\cL_{h_1}\| \leq \|\cL_{h_2}\|$, so $h\mapsto\|\cL_h\|$ is increasing. It follows, via the same proof as for \cite[Theorem 1.5(ii)]{CHHS20}, that $\psi_r$ is strictly decreasing on $[0,C_r]$.

Let $t>0$, and let $(t_n)_{n\in\N}$ be a strictly increasing sequence converging to $t$. Since $\Wt$ is compact, and $\h \mapsto I_r(\h)$ and $\h\mapsto\|\cL_{\h}\|$ are lower semi-continuous, for every $n$ there exists a minimiser $\ft_n$ such that $\|\cL_{f_n}\|= t_n$ and $\psi_r(t_n)=J_r(\ft_n)$. Again, by the compactness of $\Wt$, we may assume that the sequence $(\ft_n)_{n\in\N}$ converges to some $\ft\in\Wt$ in the cut-metric. By the lower semi-continuity of $\|\cL_{\h}\|$, we have $\|\cL_{\ft}\|\geq t$. By the lower semi-continuity of $I_r$, we conclude that
\begin{equation}
\liminf_{n\to\infty} \psi_r(t_n) = \liminf_{n\to\infty} J_r(\ft_n) \geq J_r(\ft) \geq \psi_r(t).
\end{equation}
Right-continuity of $\psi_r$ now follows by noting that $\psi_r$ is strictly decreasing. The proof of the left-continuity of $\psi_r$ is the same as the proof of \eqref{eq:variationalformulaequality}.

\medskip\noindent
(ii) This statement is immediate from the lower semi-continuity of $\|\cL_h\|$ and $I_r$, combined with the compactness of $\Wt$.
\end{proof}

%%%

\subsection{Proof of Theorem~\ref{thm:rfscaling}}
\label{sec:proofrfscaling} 

\begin{proof}

We first prove a large deviation principle for the degree of a single vertex $\approx xN$ with rate $N$ and with rate function $J_r(x,\beta)$ defined in \eqref{Jrdef}. We subsequently derive upper and lower bounds for $\psi_r(\beta)$ in terms of this rate function, and use these to compute
the scaling of $\psi_r$ near its minimum.

%%%%

\subsubsection{LDP for single degrees}
\label{sec:LDP}

In this section we prove an LDP for the degree of a single vertex $\approx xN$ subject to \eqref{everywhere}--\eqref{infconv}. The associated rate function is $J_r(x,\beta)$ introduced in \eqref{Jrdef}--\eqref{Cramer}. This rate function will be useful in the proof of Theorem \ref{thm:rfscaling}, even though the latter does not require \eqref{everywhere}--\eqref{infconv}. We prove the degree LDP under these assumptions in order to motivate the connection between the degree LDP and the Laplacian LDP, and because we will need the degree LDP in the proof of Theorem \ref{thm:mainalt}.

Let $(r_N)_{n\in\N}$ be a sequence of block graphons satisfying \eqref{everywhere}--\eqref{infconv}. Let $d_i$ be the degree of vertex $i$. For $x\in[0,1]$, let $i_x \in \{1,\ldots,N\}$ be the index such that $x\in[\frac{i_x-1}{N},\frac{i_x}{N})$. A crucial ingredient is the LDP for the family $N^{-1} d_{i_x}$, $N \in \N$, for fixed $x\in[0,1]$, which we prove with the help of the G\"artner-Ellis theorem. To that end, note that $d_{i_x} = \sum_{j \in [N]} A_{i_xj}$, with $A_{i_xj}=1$ if there is an edge between $i$ and $j$ and $A_{i_xj}=0$ otherwise. Therefore the cumulant generating function of $N^{-1} d_{i_x}$ is
\begin{equation}
\begin{split}
\Lambda_N(x,\theta)
&= \log\EE\left[\exp\left(\theta\frac{1}{N} \sum_{j \in [N]} A_{i_xj}\right)\right]
= \sum_{j \in [N] \setminus i} \log\EE\left[\exp\left(\theta\frac{1}{N} A_{i_xj}\right)\right]\\
&= \sum_{j \in [N]} \log\left((r_{N})_{i_xj}\,\e^{\frac{1}{N}\theta}+[1-(r_N)_{i_xj}]\right)\\
&= N\int_{[0,1]\setminus[\frac{i_x-1}{N},\frac{i_x}{N})}\d y\,\log\left(r_{N}(x,y)\e^{\frac{1}{N}\theta}+[1-r_N(x,y)]\right).
\end{split}
\end{equation}
Since $\|r_N-r\|_\infty\to0$ and $\lim_{N\to\infty}r_N(x,y)$ for all $(x,y)\in[0,1]^2$, we have
\begin{equation}
\label{eq:limitlogMGF}
\Lambda_r(x,\theta) = \lim_{N\to\infty} \frac{1}{N}\Lambda_N(x,N\theta)
=\int_{[0,1]}\d y\,\log\left(r(x,y)\e^\theta+[1-r(x,y)]\right)
\end{equation}
for all $x\in[0,1]$. Since $\theta \mapsto \Lambda_r(x,\theta)$ is finite and differentiable on $\R$, the G\"artner-Ellis theorem tells us that the family $N^{-1}d_{i_x}$, $N\in\N$, satisfies the LDP on $[0,1]$ with rate $N$ and with rate function $x \mapsto J_r(x,\beta)$ given by
\begin{equation}
\label{eq:variationalformularatedegree}
J_r(x,\beta) = \sup_{\theta\in\R} [\theta\beta-\Lambda_r(x,\theta)].
\end{equation}
The supremum is attained at the unique $\theta(x,\beta)$ such that
\begin{equation}
\int_{[0,1]} \d y\,\widehat{r}_\beta(x,y) = \beta \qquad \forall\, x \in [0,1],
\end{equation}
with
\begin{equation}
\widehat{r}_\beta(x,y) = \frac{\e^{\theta(x,\beta)} r(x,y)}{\e^{\theta(x,\beta)} r(x,y)+[1-r(x,y)]}.
\end{equation}
The Lagrange multiplier $\theta(x,\beta)$ exists because $r\in (0,1)$ almost everywhere by \eqref{Assbasic}. A simple computation shows that
\begin{equation}\label{eq:Jrintegral}
J_r(x,\beta)= \int_{[0,1]} \d y\,\mathcal{R}\big(\widehat{r}_\beta(x,y)\mid r(x,y)\big) 
= \sup_u \int_{[0,1]}\d y\, \cR(u(y)\mid r(x,y)),
\end{equation}
where the supremum is taken over all the measurable functions $u\colon[0,1]\to[0,1]$ satisfying $\int_{[0,1]} \d y\, u(y)\leq\beta$. The map
\begin{equation*}
\theta\mapsto \int_{[0,1]} \d y\,\frac{\e^{\theta}r(x,y)}{\e^{\theta}r(x,y)+1-r(x,y)}
\end{equation*}
is a continuous bijection from $\R$ to $(0,1)$, again because $r \in (0,1)$ almost everywhere, and so the inverse map $\beta\mapsto\theta(x,\beta)$ is continuous. 

%%%

\subsubsection{Properties of $J_r$}

We need the following lemma for the derivatives of $J_r$ with respect to $\beta$. Henceforth we write $J'_r=\frac{\partial J_r}{\partial\beta}$ and $\theta'=\frac{\partial \theta}{\partial\beta}$, and use the upper index $(k)$ for the $k$-th derivative.

\begin{lemma}
\label{lemma:derivativesJr}
$\beta \mapsto J_r(x,\beta)$ is analytic for every $x \in [0,1]$, with
\begin{equation}
J_r^{(k)}(x,\beta) = \theta^{(k-1)}(x,\beta).
\end{equation}
Moreover, subject to \eqref{rbd1}, 
\begin{equation}
\sup_{\substack{x\in[0,1]\\ \beta\in[\varepsilon,1-\varepsilon]}}|J_r^{(k)}(x,\beta)|<\infty \qquad \forall\,\varepsilon>0.
\end{equation}
\end{lemma}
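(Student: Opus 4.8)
\textbf{Proof plan for Lemma~\ref{lemma:derivativesJr}.}

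The plan is to start from the two equivalent expressions for $J_r$: the relative-entropy integral in \eqref{eq:Jrintegral} and the Legendre dual $J_r(x,\beta) = \theta(x,\beta)\beta - \Lambda_r(x,\theta(x,\beta))$ from \eqref{eq:variationalformularatedegree}. Differentiating the Legendre dual in $\beta$ and using that $\theta(x,\beta)$ is the maximiser (so $\partial_\theta[\theta\beta - \Lambda_r(x,\theta)]=0$ there, i.e. $\Lambda_r'(x,\theta(x,\beta)) = \beta$), the term involving $\theta'$ cancels and one gets $J_r'(x,\beta) = \theta(x,\beta)$, which is the claimed formula for $k=1$. For the higher derivatives, iterate: $J_r^{(k)}(x,\beta) = \frac{\partial}{\partial\beta}\theta^{(k-2)}(x,\beta) = \theta^{(k-1)}(x,\beta)$. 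The only thing to justify carefully is that $\theta(x,\beta)$ is itself analytic in $\beta$; I would get this from the analytic implicit function theorem applied to the defining relation $F(\theta,\beta) := \Lambda_r'(x,\theta) - \beta = 0$, noting that $\theta\mapsto\Lambda_r(x,\theta)$ is real-analytic (indeed extends holomorphically in a neighbourhood of the real axis, since it is an integral of $\log(r(x,y)\e^\theta + 1 - r(x,y))$ and the argument of the log stays away from $0$ and $\infty$ uniformly in $y$ for $\theta$ in a complex strip, using $r\in(0,1)$ a.e.) and that $\partial_\theta F = \Lambda_r''(x,\theta) > 0$ because $\Lambda_r(x,\cdot)$ is strictly convex — it is the log-mgf of a genuinely non-degenerate average, or directly $\Lambda_r''(x,\theta) = \int_{[0,1]}\d y\, \widehat r_{\cdot}(x,y)[1-\widehat r_{\cdot}(x,y)] > 0$. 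Analyticity of $J_r$ in $\beta$ then follows since it is built from $\theta$, $\beta$ and the analytic function $\Lambda_r$ by composition and multiplication.

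For the uniform bound under \eqref{rbd1}, I would track $\theta(x,\beta)$ and its $\beta$-derivatives explicitly. Since \eqref{rbd1} gives $\eta \leq r(x,y)\leq 1-\eta$ for some $\eta>0$, one checks that for $\beta\in[\varepsilon,1-\varepsilon]$ the Lagrange multiplier $\theta(x,\beta)$ stays in a bounded interval $[-M_\varepsilon, M_\varepsilon]$ uniformly in $x$: indeed $\beta = \int_{[0,1]}\d y\,\frac{\e^\theta r(x,y)}{\e^\theta r(x,y)+1-r(x,y)}$, and with $r$ bounded away from $0,1$ this integral is pinched between $\frac{\e^\theta \eta}{\e^\theta\eta + 1}$ and $\frac{\e^\theta(1-\eta)}{\e^\theta(1-\eta)+\eta}$, forcing $\theta$ to be bounded once $\beta$ is bounded away from $0$ and $1$. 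Next, $\theta'(x,\beta) = 1/\Lambda_r''(x,\theta(x,\beta))$, and on the compact $\theta$-range $[-M_\varepsilon,M_\varepsilon]$ with $r$ bounded away from $0,1$ one has $\widehat r_\beta(x,y)$ bounded away from $0$ and $1$, hence $\Lambda_r''(x,\theta) = \int \widehat r_\beta(1-\widehat r_\beta) \geq c_\varepsilon > 0$ uniformly in $x$; so $|\theta'|\leq 1/c_\varepsilon$. Higher derivatives $\theta^{(k)}$ are then rational expressions in $\theta',\dots,\theta^{(k-1)}$ and the derivatives $\Lambda_r^{(j)}(x,\theta)$ (all of which are integrals of bounded-away-from-degenerate quantities, hence uniformly bounded, with $\Lambda_r''$ bounded below), obtained by repeatedly differentiating the identity $\Lambda_r''(x,\theta(x,\beta))\,\theta'(x,\beta) = 1$; an induction on $k$ gives $\sup_{x,\beta}|\theta^{(k)}|<\infty$, and via $J_r^{(k)} = \theta^{(k-1)}$ the same for $J_r^{(k)}$.

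I expect the main obstacle to be purely bookkeeping: setting up the induction that expresses $\theta^{(k)}(x,\beta)$ in terms of the lower-order derivatives and the $\Lambda_r^{(j)}$, and verifying that each $\Lambda_r^{(j)}(x,\theta)$ is uniformly bounded while $\Lambda_r''$ is uniformly bounded below on the relevant compact set. There is no deep difficulty — the strict convexity of $\Lambda_r(x,\cdot)$ and assumption \eqref{rbd1} do all the work — but one must be careful that all bounds are uniform in $x\in[0,1]$, which is exactly where \eqref{rbd1} (uniform separation of $r$ from $0$ and $1$) is used rather than merely the a.e.\ statement \eqref{Assbasic}. A clean way to organise this is to prove by induction that $\theta^{(k)}(x,\beta) = P_k\big(\Lambda_r''(x,\theta)^{-1}, \Lambda_r^{(3)}(x,\theta),\dots,\Lambda_r^{(k+1)}(x,\theta)\big)$ for a fixed universal polynomial $P_k$, and then bound each argument.
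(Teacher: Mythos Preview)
Your proposal is correct and follows essentially the same route as the paper: both start from the Legendre dual $J_r(x,\beta)=\theta(x,\beta)\beta-\Lambda_r(x,\theta(x,\beta))$, differentiate to get $J_r'=\theta$ via the envelope argument, bound $\theta$ uniformly using \eqref{rbd1} by sandwiching the integrand, and then control the higher $\theta^{(k)}$ inductively via implicit differentiation of the constraint. The only notable difference is that you justify analyticity explicitly through the analytic implicit function theorem, while the paper is content to exhibit the derivatives and leave analyticity implicit; your version is in that respect slightly more careful, but the substance is the same.
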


\begin{proof}
Recall from Section \ref{sec:LDP} that
\begin{equation}
J_r(x,\beta) = \int_{[0,1]} \d y\,\mathcal{R}\big(\widehat{r}_\beta(x,y)\mid r(x,y)\big) 
= \theta(x,\beta)\beta - \int_{[0,1]}\d y\, \log\Big(1+\big(\e^{\theta(x,\beta)}-1\big)r(x,y)\Big).
\end{equation}
Differentiating with respect to $\beta$ and using that $d_{\widehat r_\beta}(x)=\beta$, we obtain
\begin{equation}
J_r'(x,\beta)=\theta(x,\beta).
\end{equation}
Implicit differentiation of the equation
\begin{equation}
\int_{[0,1]}\d y\, \frac{\e^{\theta}r(x,y)}{\e^\theta r(x,y)+(1-r(x,y))} = \beta
\end{equation}
gives
\begin{equation}
\theta'(x,\beta) = \left(\int_{[0,1]}\d y\, \frac{\e^{\theta(x,\beta)} r(x,y)(1-r(x,y))}{\left[\e^{\theta(x,\beta)} r(x,y)+(1-r(x,y))\right]^2}\right)^{-1}.
\end{equation}
Fix $\varepsilon>0$. Since $r$ is bounded away from 0 and 1 by \eqref{rbd1}, we see that $\theta'(x,\beta)$ is bounded for $x\in[0,1]$ and $\beta\in[C_r,1-\varepsilon]$ when $\theta(x,\beta)$ is. Iteratively applying the chain, product and quotient rules of differentiation, we obtain that $\theta^{(k)}$ is some polynomial of order $k$ in the variables
\begin{equation}
\left(\int_{[0,1]} \d y\, \frac{\e^{\theta}r(x,y)(1-r(x,y))}{\left[\e^\theta r(x,y)+(1-r(x,y))\right]^2}\right)^{-1},
\qquad \int_{[0,1]} \d y\,\frac{f\left(\e^{\theta},r,\theta',\ldots,\theta^{(k-1)}\right)}{\left[\e^\theta r(x,y)+(1-r(x,y))\right]^j},
\end{equation} 
with $f$ a polynomial and $j\in\mathbb{N}$. By induction, we obtain that $f$ is bounded when $\theta$ is bounded, and hence that $\theta^{(k)}$ is bounded.

Thus, it remains to show $\sup_{x\in[0,1],\beta\in[\varepsilon,1-\varepsilon]}|\theta(x,\beta)|<\infty$. Let 
\begin {equation}
r_- = \inf_{(x,y)\in[0,1]^2} r(x,y)>0, \qquad \widetilde\theta(\beta) = \log\frac{\beta(1-r_-)}{(1-\beta)r_-}.
\end{equation} 
Since the map $r \mapsto \frac{\e^\theta r}{\e^\theta r+(1-r)}$ is increasing, we have
\begin{equation}
\int_{[0,1]} \d y\, \frac{\e^{\widetilde\theta} r(x,y)}{\e^{\widetilde\theta} r(x,y)+(1-r(x,y))}
\geq \frac{\e^{\widetilde\theta}r_-}{\e^{\widetilde\theta}r_-+(1-r_-)} = \beta.
\end{equation}
Hence, $\theta(x,\beta)\leq\widetilde\theta(\beta)<\infty$. Since $\widetilde\theta(\beta)$ is independent of $x$ and $\sup_{\beta\in[\varepsilon,1-\varepsilon]}|\widetilde\theta(\beta)|<\infty$, all claims are settled.
\end{proof}

%%%

\subsubsection{Lower bound}

Let $\bar{h}$ be a minimizer of \eqref{rf}. Then $\int_{[0,1]} \d y\,\bar{h}(x,y) \leq\|\cL_{\bar{h}}\|\leq \beta$ for each $x\in S_r(\beta)$. Hence, by \eqref{eq:Jrintegral},
\begin{equation}
\int_{[0,1]} \d y\,\mathcal{R}\big(\bar{h}(x,y) \mid r(x,y)\big) \geq J_r(x,\beta),
\end{equation}
which implies
\begin{equation}
\label{psilb}
\begin{split}
&\psi_r(\beta) = I_r(\bar{h}) \geq \int_{S_r(\beta)} \d x\,J_r(x,\beta).
\end{split}
\end{equation}

%%%

\subsubsection{Upper bound}

Let 
\begin{equation}
h_{\beta}(x,y) = \begin{cases}
r(x,y), & x,y\not\in S_r(\beta)\\
\hat{r}_\beta(x,y), & x \in S_r(\beta)\text{, but } y \not\in S_r(\beta),\\
\hat{r}_\beta(y,x), & y \in S_r(\beta)\text{, but } x \not\in S_r(\beta),\\
\min\{\hat{r}_\beta(x,y),\hat{r}_\beta(y,x)\}, & x,y\in S_r(\beta). 
\end{cases}
\end{equation}
Then $h_{\beta}$ converges to $r$ in $L^\infty$ as $\beta\uparrow C_r$ by Lemma \ref{lemma:derivativesJr}. We show that $\|\cL_{h_\beta}\|\leq\beta$ for $\beta$ close enough to $C_r$. Note that $\|\d_{h_{\beta}}\|_\infty\leq\beta$ by construction, so we only need to show $\lambda_{\max}(\cL_{h_\beta})\leq\beta$ for $\beta$ large enough. Let $\lambda_\beta = \lambda_{\max}(\cL_{h_\beta})$ and let $\lambda'$ be the limit of any convergent subsequence of $(\lambda_\beta)_{k\in\N}$. We show that $\lambda'$ is an eigenvalue of $\cL_h$, so that $\limsup_{\beta\uparrow C_r}\lambda_\beta \leq \lambda_{\max}(\cL_r)\leq\beta$ for $\beta$ close enough to $C_r$. Here we use Assumption \eqref{gap}. 

Assume, by contradiction, that $\lambda'$ is not an eigenvalue of $\cL_r$. Without loss of generality, we may assume that $\lambda_\beta$ converges to $\lambda'$. Consider $F\colon\, L^\infty([0,1]^2) \times \R \times L^2([0,1]) \to L^2([0,1])$ given by $F(g,\mu,u) = \cL_{g}u-\mu u$. This map is bounded and affine in each coordinate, and hence is continuously Fr\'echet differentiable. The Fr\'echet derivative of $F$ at a point $(g,\mu,u)$ is given by 
\begin{equation}
((DF)(g,\mu,u))(f,\nu,w)=F(f,\nu,u)+F(g,\mu,w).
\end{equation}
Indeed, let $(f_k,\nu_k,w_k)$ such that $\|(f_k,\nu_k,w_k)\|=\|f_k\|_{\infty}+|\nu_k|+\|w_k\|_2\to0$ as $k\to\infty$. Then
\begin{equation}
\begin{split}
&\frac{\left\|F(g+f_k,\mu+\nu_k,u+w_k)-F(g,\mu,u)-((DF)(g,\mu,u))(f_k,\nu_k,w_k)\right\|}{\|(f_k,\nu_k,w_k)\|}\\
=&\frac{\|F(f_k,\nu_k,w_k)\|}{\|(f_k,\nu_k,w_k)\|}\leq\frac{\|\cL_{f_k}w_k\|_2+\|\nu_k w_k\|_2}{\|f_k\|_\infty+|\nu_k|+\|w_k\|_2}\leq\frac{2\|f_k\|_{\infty}\|w_k\|_2+|\nu_k|\|w_k\|_2}{\|f_k\|_\infty+|\nu_k|+\|w_k\|_2}\to0,\qquad k\to\infty.
\end{split}
\end{equation}
Note that $F$ is not necessarily Fr\'echet differentiable as a function $L^2([0,1]^2) \times \mathbb{R} \times L^2([0,1]) \to L^2([0,1])$. Since $\lambda'$ is not an eigenvalue of $\cL_r$, the map
\begin{equation}
w\mapsto ((DF)(r,\lambda',0))(0,0,w)=F(r,\lambda',w)
\end{equation}
has a trivial kernel and so is an isomorphism on its image space. So, by the implicit function theorem for Banach spaces \cite[Theorem I.5.9]{L99}, there exists a neighbourhood $U$ of $(r,\lambda')  \in L^\infty([0,1]^2) \times \R$ and a neighbourhood $V$ of $\underline{0} \in L^2([0,1])$ such that $F(g,\mu,u)=0$ if and only if $u=0$ for all $(g,\mu,u) \in U \times V$. Since $h_\beta \to r$ in $L^\infty$ and $\lambda_\beta \to \lambda'$ as $\beta\uparrow C_r$, we have that $(h_\beta,\lambda_\beta) \in U$ for $\beta \geq \beta_0$. However, since $\lambda_\beta$ is an eigenvalue of $\cL_{h_\beta}$, there exists an eigenfunction $u_\beta \in L^2([0,1])$ with $u_\beta \neq 0$ such that $F(h_\beta,\lambda_\beta,u_\beta) = 0$. Since $V$ is a neighbourhood of 0, we can rescale $u_\beta$ such that it lies in $V$. This yields a contradiction, and so $\lambda'$ must be an eigenvalue of $\cL_r$.

Now since $\|\cL_{h_\beta}\|\leq\beta$ for $\beta$ sufficiently close to $C_r$, we have
\begin{equation}
\label{niceint}
\begin{split}
\psi_r(\beta)&\leq I_r(h_\beta)\\
&= 2 \int_{S_r(\beta)\times([0,1]\setminus S_r(\beta))} \dx\,\dy\,
\cR\big(\hat{r}_{\beta}(x,y)\mid r(x,y)\big)\\
&\qquad + \int_{S_r(\beta)^2}\dx\,\dy\, \cR\big(\min\{\hat{r}_{\beta}(x,y),\hat{r}_{\beta}(y,x)\}\mid r(x,y)\big)\\
& \leq 2\int_{S_r(\beta)\times([0,1]\setminus S_{\beta})} \dx\,\dy\,
\cR\big(\hat{r}_{\beta}(x,y)\mid r(x,y)\big) + \int_{S_r(\beta)^2}\dx\,\dy\, \cR\big(\hat{r}_{\beta}(x,y)\mid r(x,y)\big)\\
&\qquad + \int_{S_r(\beta)^2}\dx\,\dy\, \cR\big(\hat{r}_{\beta}(y,x)\mid r(x,y)\big)\\
&= 2\int_{S_r(\beta)}\dx\, J_r(x,\beta), \qquad  \beta\uparrow C_r.
\end{split}
\end{equation}
For the last equality, we use that $r$ is symmetric and that $S_r(\beta)^2$ is a symmetric domain.

\begin{remark}
\label{rem:link}
{\rm The above shows that, up to a constant, the last integral in \eqref{niceint} also equals the rate function in the LDP for the maximum degree. The upper bound can also be shown directly by the following computation. 

The state space for the edges determining the adjacency matrix $(A_{ij})$ is $\{0,1\}^{\binom{N}{2}}$, which we endow with the standard partial ordering. The probability distribution of the edges is the product measure $\prod_{\{i,j\}} \mathrm{BER}(r_N(\tfrac{i}{N},\tfrac{j}{N}))$, which is log-convex. Note that the events $\{N^{-1} \max_{1 \leq i \leq N-1} d_i $ $\le \beta \}$ and $\{N^{-1} d_N \le \beta\}$ are non-increasing in the partial ordering. Hence we can use the FKG-inequality iteratively, to get
\begin{equation}
\P\left(N^{-1} \max_{i \in [N]} d_i \leq \beta\right) \ge \prod_{i \in [N]} \P(N^{-1} d_i \leq \beta). 
\end{equation}
Consequently,
\begin{equation}
\label{Jxbeta}
\begin{aligned}
&\frac{1}{\binom{N}{2}} \log \P\left(N^{-1} \max_{i \in [N]} d_i \leq \beta\right) 
\geq \frac{1}{\binom{N}{2}} \sum_{i=1}^N \log \P(N^{-1} d_i \leq \beta)\\
&= \frac{2N}{N-1} \frac{1}{N} \sum_{i \in [N]} \frac{1}{N} \log \P(N^{-1} d_i \leq \beta)
= \frac{2N}{N-1} \int_{[0,1]} \d x\,\frac{1}{N} \log \P(N^{-1} d_{i_x} \leq \beta).
\end{aligned}
\end{equation}
By the law of large numbers, $\lim_{N\to\infty} N^{-1} d_{i_x} = \int_{[0,1]} \d y\,r(x,y) = d_r(x)$ $\P$-a.s., and so in the limit as $N\to\infty$ the last integral in \eqref{Jxbeta} may be restricted to the set $S_r(\beta) = \{x \in [0,1]\colon\,d_r(x) \geq \beta\}$, i.e.,
\begin{equation*}
\liminf_{N\to\infty} \frac{1}{\binom{N}{2}} \log \P\left(N^{-1} \max_{i \in [N]} d_i \leq \beta\right) 
\geq - 2 \int_{S_r(\beta)} \d x\,J_r(x,\beta),
\end{equation*}
where we use that the family $N^{-1} d_{i_x}$, $N\in\N$, satisfies the LDP on $[0,1]$ with rate $N$ and with rate function $x \mapsto J_r(x,\beta)$, as shown in Section~\ref{sec:LDP}.
}\hfill$\spadesuit$
\end{remark}

%%%

\subsubsection{Scaling of $J_r(x,\beta)$}
\label{sec:scalingJr}

Via Lemma \ref{lemma:derivativesJr} it is straightforward to show that $J_r(x,d_r(x))=J'_r(x,d_r(x))=0$ and $J''_r(x,d_r(x))=\frac{1}{v_r(x)}$. So, by Taylor expansion,
\begin{equation}
J_r(x,\beta) = \frac{1}{2v_r(x)}(\beta-d_r(x))^2 + \frac{1}{6}\theta''(x,\beta_*)(\beta-d_r(x))^3
\end{equation}
for some $\beta_*=\beta_*(x)\in[\beta,d_r(x)]$. Inserting this expansion into the lower and upper bounds derived above and using that $\theta''(x,\beta)$ is bounded uniformly in $x$ and $\beta$, we obtain
\begin{equation}
\psi_r(\beta)\asymp \int_{S_r(\beta)}\d x\, J_r(x,\beta)\sim\int_{S_r(\beta)}\d x\, \frac{1}{2v_r(x)}(\beta-d_r(x))^2, \qquad \beta\uparrow C_r.
\end{equation}
\end{proof}

%%%

\section{Proof of theorems for upward LDP}

Sections~\ref{sec:proofmainalt}--\ref{sec:proofrfscalingalt} provide the proof of Theorems~\ref{thm:mainalt}--\ref{thm:rfscalingalt}, respectively. 

%%%

\subsection{Proof of Theorem~\ref{thm:mainalt}}
\label{sec:proofmainalt}

\begin{proof}
We first prove the LDP for the maximum degree. Afterwards we prove the LDP for the maximal eigenvalue.
%%%

\subsubsection{LDP for the maximum degree}

For $x\in[0,1]$, let $i_x \in \{1,\ldots,N\}$ be the index such that $x\in[\frac{i_x-1}{N},\frac{i_x}{N})$. Let $d_i$ be the degree of vertex $i$. First note that we can sandwich
\begin{equation}
\label{eq:estimate}
\begin{split}
\sup_{x\in[0,1]} \P(N^{-1}d_{i_x} \geq \beta)
&\leq \P\left(N^{-1}\max_{i \in [N]} d_i \geq \beta\right)\\
&\leq \sum_{i \in [N]} \P(N^{-1} d_i \geq \beta)
\leq N \sup_{x\in[0,1]} \P(N^{-1} d_{i_x} \geq \beta).
\end{split}
\end{equation}
Using the LDP for the single degrees derived in Section~\ref{sec:LDP} subject to \eqref{everywhere}--\eqref{infconv}, we obtain
\begin{equation}
\label{eq:rfmax}
\begin{split}
&\lim_{N\to\infty} \frac{1}{N} \log\P\left(N^{-1} \max_{i \in [N]}d_i\geq\beta\right)
= \lim_{N\to\infty} \sup_{x\in[0,1]} \frac{1}{N} \log\P\left(N^{-1} d_{i_x}\geq\beta\right)\\
&= \sup_{x\in[0,1]} \lim_{N\to\infty} \frac{1}{N}\log\P\left(N^{-1} d_{i_x}\geq\beta\right)
= - \inf_{x\in[0,1]} J_r(x,\beta) = -\widehat{\psi}_r(\beta).
\end{split}
\end{equation}
The convergence of $\Lambda(\theta)$ in \eqref{eq:limitlogMGF} is uniform in $x\in[0,1]$ because $\|r_N-r\|_\infty \to 0$ as $N\to\infty$. Hence the convergence in the LDP for $N^{-1} d_{i_x}$ is uniform in $x\in[0,1]$. This allows us to swap the supremum and the limit in the second equality.

%%%

\subsubsection{LDP for $\lambda_{\max}(L_N)$}

By Weyl's interlacing inequalities, we have $\lambda_{\max}(L_N)\leq\lambda_{\max}(D_N)-\lambda_{\min}(A_N)$. We also know that $\lambda_{\max}(D_N)\leq\lambda_{\max}(L_N)$. Hence
\begin{equation}
\label{eq:interlacing}
\lambda_{\max}(D_N) \leq \lambda_{\max}(L_n) \leq \lambda_{\max}(D_N) - \lambda_{\min}(A_n).
\end{equation}
Since $r$ is non-negative definite by \eqref{posdef}, the smallest eigenvalue of $T_r$ is 0. Now let $(T_n)_{n\in\N}$ be a sequence of self-adjoint operators acting on $L^2([0,1])$ and converging in operator norm to some operator $T$. Then
\begin{equation}
\left|\lambda_{\min}(T_n)-\lambda_{\min}(T)\right|
= \left|\inf_{\|f\|_2=1}\langle f,T_nf\rangle-\inf_{\|f\|_2=1}\langle f,Tf\rangle\right|
\leq \sup_{\|f\|_2=1}|\langle f,T_nf\rangle-\langle f,Tf\rangle| \to 0
\end{equation}
as $n\to\infty$. Thus, the map $h\mapsto\lambda_{\min}(\cT_h)$ is continuous in the cut norm. Hence, by the contraction principle, $N^{-1}\lambda_{\min}(A_N) = \lambda_{\min}(T_{h^{G_N}})$ converges to $\lambda_{\min}(T_r) = 0$ at an exponential rate with coefficient $N^2$. By \eqref{eq:rfmax} and \eqref{eq:interlacing} we can sandwich, for all $\varepsilon>0$,
\begin{equation}
\begin{aligned}
- \widehat{\psi}_r(\beta) 
&= - \inf_{x\in[0,1]} J_r(x,\beta) 
= \lim_{N\to\infty}\frac{1}{N}\log\P\left(N^{-1} \max_{i \in [N]}d_i\geq\beta\right)\\
&\leq \lim_{N\to\infty} \frac{1}{N}\log\P\big(N^{-1}\lambda_{\max}(L_N)\geq\beta\big)\\
&\leq \lim_{N\to\infty} \frac{1}{N}\log\left[\P\left(N^{-1}\max_{i \in [N]} d_i\geq\beta-\varepsilon\right)
+\P\big(N^{-1}\lambda_{\min}(A_N)\leq-\varepsilon\big)\right]\\
&= \lim_{N\to\infty} \frac{1}{N }\log\P\left(N^{-1}\max_{i \in [N]} d_i\geq\beta-\varepsilon\right)
= - \inf_{x\in[0,1]} J_r(x,\beta-\varepsilon) 
= - \widehat{\psi}_r(\beta-\varepsilon).
\end{aligned}
\end{equation}
The result now follows from continuity of $\widehat\psi_r$, as stated in Theorem \ref{thm:rfpropalt}. Note that the proof of Theorem \ref{thm:rfpropalt} does not use Theorem \ref{thm:mainalt}.
\end{proof}

%%%

\subsection{Proof of Theorem~\ref{thm:rfpropalt}}
\label{sec:proofrfpropalt}

%%%I

\subsubsection{Proof that $\widehat\psi_r$ is strictly increasing}

Assume that $\inf_{x\in[0,1]} \theta(x,\beta)>0$ for all $\beta>C_r$. Then, by Lemma \ref{lemma:derivativesJr}, for $\beta_2>\beta_1>C_r$,
\begin{equation}
\begin{split}
\widehat\psi_r(\beta_2) =& \inf_{x\in[0,1]}J_r(x,\beta_2)\\
\geq\,& \inf_{x\in[0,1]}J_r(x,\beta_1)+\theta(x,\beta_1)(\beta_2-\beta_1)\\
\geq\,& \widehat\psi_r(\beta_1) + \inf_{x\in[0,1]}\theta(x,\beta_1)(\beta_2-\beta_1)>\widehat\psi_r(\beta_1).
\end{split}
\end{equation}
For the first inequality we use that $J'_r(x,\beta)=\theta(x,\beta)$ is increasing in $\beta$. Hence, it suffices to show $\inf_{x\in[0,1]}\theta(x,\beta)>0$ for all $\beta>C_r$. 

Note that the map $r\mapsto \frac{\e^\theta r}{1+(\e^\theta-1)r}$ is concave. Let $\widetilde\theta(x,\beta)=\log\frac{\beta(1-d_r(x))}{(1-\beta)d_r(x)}$. Then, by Jensen's inequality, for all $x\in[0,1]$,
\begin{equation}
\int_{[0,1]} \d y\, \frac{e^{\widetilde\theta(x,\beta)} r(x,y)}{1+(\e^{\widetilde\theta(x,\beta)}-1)r(x,y)}\leq\frac{\e^{\widetilde\theta(x,\beta)} d_r(x)}{1+(\e^{\widetilde\theta(x,\beta)}-1)d_r(x)}=\beta.
\end{equation}
Recall that $\theta(\beta,x)$ is chosen such that 
\begin{equation}
\int_{[0,1]} \d y\, \frac{\e^{\theta(x,\beta)} r(x,y)}{1+(\e^{\theta(x,\beta)}-1)r(x,y)}=\beta.
\end{equation} 
This implies that $\theta(x,\beta)\geq\widetilde\theta(x,\beta)\geq\log\frac{\beta(1-C_r)}{(1-\beta)C_r}$. We conclude the proof that $\widehat\psi_r$ is strictly increasing by noting that this lower bound is strictly positive for $\beta>C_r$ and is independent of $x$.

%%%

\subsubsection{Continuity of $\widehat\psi_r$}

Right-continuity of $\widehat\psi_r$ follows from the fact that the infimum of continuous functions is right-continuous. By Lemma \ref{lemma:derivativesJr}, $\sup_{x\in[0,1]}\theta(x,\beta)<\infty$ for all $\beta\geq C_r$. Thus,
\begin{equation}
\begin{split}
\widehat\psi_r(\beta-\varepsilon) =& \inf_{x\in[0,1]}J_r(x,\beta-\varepsilon)\\
\geq\,& \inf_{x\in[0,1]}[J_r(x,\beta)-\theta(x,\beta)\varepsilon]\\
\geq\,& \widehat\psi_r(\beta)-\sup_{x\in[0,1]}\theta(x,\beta)\varepsilon\to\widehat\psi_r(\beta), \qquad \varepsilon\downarrow 0.
\end{split}
\end{equation}

%%%

\subsubsection{Value of $\widehat\psi_r$ at the boundary}

We first show that $\widehat\psi_r(C_r)=0$. By Lemma \ref{lemma:derivativesJr},
\begin{equation}
\begin{split}
\widehat\psi_r(C_r) =& \inf_{x\in[0,1]}\left[J_r(x,d_r(x))+(C_r-d_r(x))\sup_{\beta\in[d_r(x),C_r]}\theta(x,\beta)\right]\\
\leq&\, \inf_{x\in[0,1]}(C_r-d_r(x))\sup_{\beta\in[d_r(x),C_r]}\theta(x,\beta)=0,
\end{split}
\end{equation}
where we use that $C_r=\sup_{x\in[0,1]}d_r(x)$ and that $\theta(x,\beta)$ is bounded uniformly in $x$ and $\beta$. Since
$\hat r_1(x,y)\equiv 1$, we have
\begin{equation}
\widehat\psi_r(1)=\inf_{x\in[0,1]}J_r(x,1)=\inf_{x\in[0,1]}\int_{[0,1]}\d y\, \log\frac{1}{r(x,y)}.
\end{equation}

%%%

\subsection{Proof of Theorem~\ref{thm:rfscalingalt}}
\label{sec:proofrfscalingalt}

Recall from Section \ref{sec:scalingJr} that
\begin{equation}
J_r(x,\beta) = \frac{1}{2v_r(x)}\,(\beta-d_r(x))^2 + \frac{1}{6}\,\theta''(x,\beta_*)\,(\beta-d_r(x))^3,
\end{equation}
for some $\beta_*=\beta_*(x)\in[d_r(x),\beta]$. By Lemma \ref{lemma:derivativesJr}, $\theta''$ is bounded uniformly in $x$ and also uniformly in $\beta$ bounded away from 1, so it immediately follows that 
\begin{equation}
\hat\psi_r(\beta) \leq \inf_{x\in\cD_r}J_r(x,\beta) \leq \inf_{x\in\cD_r}\frac{1}{2v_r(x)}(\beta-C_r)^2[1+o(1)], \qquad \beta\downarrow C_r.
\end{equation}
Let $x_*=x_*(\beta)=\arg\min_{x\in[0,1]}J_r(x,\beta)$. It is clear from the above that $\beta-C_r\leq\beta-d_r(x_*)=O(\beta-C_r)$. Hence
\begin{equation}
\hat\psi_r(\beta) = J_r(x_*,\beta) \geq \frac{1}{2v_r(x_*)}(\beta-C_r)^2+O((\beta-C_r))^3), 
\qquad \beta\downarrow C_r.
\end{equation}
By continuity of $r$, $\inf_{x\in\cD_r}|x-x_*| \downarrow 0$ as $\beta\downarrow C_r$. Since also $v_r$ is continuous, this implies that $\frac{1}{2v_r(x_*)}\geq \inf_{x\in\cD_r}\frac{1}{2v_r(x)}[1+o(1)]$ as $\beta\downarrow C_r$. We conclude that 
\begin{equation}
\hat\psi_r(\beta) \geq \inf_{x\in\cD_r}\frac{1}{2v_r(x)}(\beta-C_r)^2[1+o(1)], \qquad \beta\downarrow C_r,
\end{equation}
which settles the claim.

%%%%%%%%%%%%%%%%%%% REFERENCES %%%%%%%%%%%%%%%%%%%%%%%

%\bibliographystyle{abbrvnat}
%\bibliography{bibfile} 

\begin{thebibliography}{28}
\providecommand{\natexlab}[1]{#1}
\providecommand{\url}[1]{\texttt{#1}}
\expandafter\ifx\csname urlstyle\endcsname\relax
\providecommand{\doi}[1]{doi: #1}\else
\providecommand{\doi}{doi: \begingroup \urlstyle{rm}\Url}\fi

\bibitem[Augeri(2016)]{augeri2016}
F.~Augeri.
\newblock Large deviations principle for the largest eigenvalue of {W}igner matrices without {G}aussian tails.
\newblock \emph{Electron. J. Probab.}, 21:\penalty0 Paper No. 32, 49, 2016.
%\newblock \doi{10.1214/16-EJP4146}.
%\newblock URL \url{https://doi.org/10.1214/16-EJP4146}.

\bibitem[Augeri et~al.(2021)Augeri, Guionnet, andHusson]{augeriguionnethusson2019}
F.~Augeri, A.~Guionnet, and J.~Husson.
\newblock Large deviations for the largest eigenvalue of sub-{G}aussian matrices.
\newblock \emph{Comm. Math. Phys.}, 383:\penalty0 997--1050, 2021.

\bibitem[Ben~Arous and Guionnet(1997)]{arous:guionnet}
G.~Ben~Arous and A.~Guionnet.
\newblock Large deviations for {W}igner's law and {V}oiculescu's non-commutative entropy.
\newblock \emph{Probab. Theory Related Fields}, 108\penalty0 (4):\penalty0 517--542, 1997.

\bibitem[Ben~Arous et~al.(2001)Ben~Arous, Dembo, and Guionnet]{arous:guionnet:dembo}
G.~Ben~Arous, A.~Dembo, and A.~Guionnet.
\newblock Aging of spherical spin glasses.
\newblock \emph{Probab. Theory Related Fields}, 120\penalty0 (1):\penalty0 1--67, 2001.
%\newblock ISSN 0178-8051.
%\newblock \doi{10.1007/PL00008774}.
%\newblock URL \url{https://doi.org/10.1007/PL00008774}.

\bibitem[Bordenave and Caputo(2014)]{bordenavecaputo2014}
C.~Bordenave and P.~Caputo.
\newblock A large deviation principle for {W}igner matrices without {G}aussian tails.
\newblock \emph{Ann. Probab.}, 42\penalty0 (6):\penalty0 2454--2496, 2014.
%\newblock ISSN 0091-1798.
%\newblock \doi{10.1214/13-AOP866}.
%\newblock URL \url{https://doi.org/10.1214/13-AOP866}.

\bibitem[Bordenave and Lelarge(2010)]{BordenaveLelarge}
C.~Bordenave and M.~Lelarge.
\newblock Resolvent of large random graphs.
\newblock \emph{Random Structures Algorithms}, 37\penalty0 (3):\penalty0 332--352, 2010.
%\newblock ISSN 1042-9832.
%\newblock \doi{10.1002/rsa.20313}.
%\newblock URL \url{https://doi.org/10.1002/rsa.20313}.

\bibitem[Bryc(1993)]{B93}
W.~Bryc.
\newblock A remark on the connection between the large deviation principle and the central limit theorem.
\newblock \emph{Stat. Probab. Lett.}, 18:\penalty0 253--256, 1993.

\bibitem[Bryc et~al.(2006)Bryc, Dembo, and Tiefeng]{bryc}
W.~Bryc, A.~Dembo, and J.~Tiefeng.
\newblock Spectral measure of large random {H}ankel, {M}arkov and {T}oeplitz matrices.
\newblock \emph{Ann. Probab.}, 34\penalty0 (1):\penalty0 1--38, 2006.
%\newblock ISSN 0091-1798.
%\newblock \doi{10.1214/009117905000000495}.
%\newblock URL \url{https://doi.org/10.1214/009117905000000495}.

\bibitem[Campbell et~al.(2022)Campbell, Luh, O'Rourke, Arenas-Velilla, and P{\'e}rez-Abreu]{campbell2022extreme}
A.~Campbell, K.~Luh, S.~O'Rourke, S.~Arenas-Velilla, and V.~P{\'e}rez-Abreu.
\newblock Extreme eigenvalues of laplacian random matrices with gaussian entries.
\newblock \emph{arXiv preprint arXiv:2211.17175}, 2022.

\bibitem[Chakrabarty et~al.(2021)Chakrabarty, Hazra, den Hollander, and Sfragara]{chakrabarty2019}
A.~Chakrabarty, R.~S. Hazra, F.~den Hollander, and M.~Sfragara.
\newblock Spectra of adjacency and {L}aplacian matrices of inhomogeneous {E}rd{\H{o}}s-{R}{\'e}nyi random graphs.
\newblock \emph{Random Matrices Theory Appl.}, 10\penalty0 (1):\penalty0 Paper No. 2150009, 34, 2021.
%\newblock ISSN 2010-3263.
%\newblock \doi{10.1142/S201032632150009X}.
%\newblock URL \url{https://doi.org/10.1142/S201032632150009X}.

\bibitem[Chakrabarty et~al.(2022)Chakrabarty, Hazra, den Hollander, and Sfragara]{CHHS20}
A.~Chakrabarty, R.~S. Hazra, F.~den Hollander, and M.~Sfragara.
\newblock Large deviation principle for the maximal eigenvalue of inhomogeneous {E}rd{\H{o}}s-{R}{\'e}nyi random graphs.
\newblock \emph{J. Theoret. Probab.}, 35\penalty0 (4):\penalty0 2413--2441, 2022.
%\newblock ISSN 0894-9840.
%\newblock \doi{10.1007/s10959-021-01138-w}.
%\newblock URL \url{https://doi.org/10.1007/s10959-021-01138-w}.

\bibitem[Chatterjee and Hazra(2022)]{chatterjee-hazra}
A.~Chatterjee and R.~S. Hazra.
\newblock Spectral properties for the {L}aplacian of a generalized {W}igner matrix.
\newblock \emph{Random Matrices Theory Appl.}, 11\penalty0 (3):\penalty0 Paper No. 2250026, 66, 2022.
%\newblock ISSN 2010-3263.
%\newblock \doi{10.1142/S2010326322500265}.
%\newblock URL \url{https://doi.org/10.1142/S2010326322500265}.

\bibitem[Chatterjee(2017)]{chatterjeeln2017}
S.~Chatterjee.
\newblock \emph{Large {D}eviations for {R}andom {G}raphs}, volume 2197 of \emph{Lecture Notes in Mathematics}.
\newblock Springer, Cham, 2017{\natexlab{b}}.
%\newblock ISBN 978-3-319-65815-5; 978-3-319-65816-2.
%\newblock \doi{10.1007/978-3-319-65816-2}.
%\newblock URL \url{https://doi.org/10.1007/978-3-319-65816-2}.
\newblock Lecture notes from the 45th Probability Summer School held in Saint-Flour, June 2015, \'{E}cole d'\'{E}t\'{e} de Probabilit\'{e}s de Saint-Flour.

\bibitem[Chatterjee and Varadhan(2011)]{CV11}
S.~Chatterjee and S.~R.~S. Varadhan.
\newblock The large deviation principle for the {E}rd{\H{o}}s-{R}{\'e}nyi random graph.
\newblock \emph{European J. Combin.}, 32\penalty0 (7):\penalty0 1000--1017, 2011.
%\newblock ISSN 0195-6698.
%\newblock \doi{10.1016/j.ejc.2011.03.014}.
%\newblock URL \url{https://doi.org/10.1016/j.ejc.2011.03.014}.

\bibitem[den Hollander(2000)]{H00}
F.~den Hollander.
\newblock \emph{Large Deviations}, volume~14 of \emph{Fields Institute Monographs}.
\newblock American Mathematical Society, Providence, RI, 2000.

\bibitem[Dhara and Sen(2019)]{DS19}
S.~Dhara and S.~Sen.
\newblock Large deviation for uniform graphs with given degrees.
\newblock \emph{Ann. Appl. Probab.}, 32\penalty0 (3):\penalty0 2327--2353, 2022.

\bibitem[Diao et~al.(2016)Diao, Guillot, Khare, and Rajaratnam]{DGKR16}
P.~Diao, D.~Guillot, A.~Khare, and B.~Rajaratnam.
\newblock Model-free consistency of graph partitioning.
\newblock \emph{arXiv preprint arXiv:1608.03860}, 2016.

\bibitem[Ding and Jiang(2010)]{DingJiang}
X.~Ding and T.~Jiang.
\newblock Spectral distributions of adjacency and {L}aplacian matrices of random graphs.
\newblock \emph{Ann. Appl. Probab.}, 20\penalty0 (6):\penalty0 2086--2117, 2010.
%\newblock ISSN 1050-5164.
%\newblock \doi{10.1214/10-AAP677}.
%\newblock URL \url{https://doi.org/10.1214/10-AAP677}.

\bibitem[Grone and Merris(1994)]{GM94}
R.~Grone and R.~Merris.
\newblock The Laplacian spectrum of a graph ii.
\newblock \emph{SIAM Journal on discrete mathematics}, 7\penalty0 (2):\penalty0 221--229, 1994.

\bibitem[Jiang(2012)]{Jiang2012}
T.~Jiang.
\newblock Empirical distributions of {L}aplacian matrices of large dilute random graphs.
\newblock \emph{Random Matrices Theory Appl.}, 1\penalty0 (3):\penalty0 1250004, 20, 2012.
%\newblock ISSN 2010-3263.
%\newblock \doi{10.1142/S2010326312500049}.
%\newblock URL \url{https://doi.org/10.1142/S2010326312500049}.

\bibitem[Kat0(1966)]{K66}
T.~ Kato.
\newblock \emph{Perturbation Theory for Linear Operators}.
\newblock Springer, Berlin, 1966. 

\bibitem[Khorunzhy et~al.(2004)Khorunzhy, Shcherbina, and Vengerovsky]{Khorunzhyetal}
O.~Khorunzhy, M.~Shcherbina, and V.~Vengerovsky.
\newblock Eigenvalue distribution of large weighted random graphs.
\newblock \emph{J. Math. Phys.}, 45\penalty0 (4):\penalty0 1648--1672, 2004.

%\newblock ISSN 0022-2488.
%\newblock \doi{10.1063/1.1667610}.
%\newblock URL \url{https://doi.org/10.1063/1.1667610}.

\bibitem[Lang(1999)]{L99}
S.~Lang.
\newblock \emph{{Fundamentals of Differential Geometry}}, volume 191 of \emph{Graduate Texts in Mathematics}.
\newblock Springer, Berlin, 1999.

\bibitem[Lov\'{a}sz(2012)]{L12}
L.~Lov\'{a}sz.
\newblock \emph{Large Networks and Graph Limits}, volume~60 of \emph{American Mathematical Society Colloquium Publications}.
\newblock American Mathematical Society, Providence, RI, 2012.

\bibitem[Lubetzky and Zhao(2015)]{LZ16}
E.~Lubetzky and Y.~Zhao.
\newblock On replica symmetry of large deviations in random graphs.
\newblock \emph{Random Structures Algorithms}, 47\penalty0 (1):\penalty0 109--146, 2015.

\bibitem[Markering(2020{\natexlab{a}})]{M20}
M.~Markering.
\newblock The large deviation principle for inhomogeneous {E}rd{\H{o}}s--{R}{\'e}nyi random graphs.
\newblock \emph{J. Theoret. Probab.}, 36\penalty0 (4):\penalty0 711--727, 2023.

\bibitem[Markering(2020)]{M20thesis}
M.~Markering.
\newblock The large deviation principle for inhomogeneous {E}rd{\H{o}}s--{R}{\'e}nyi random graphs.
\newblock \emph{Bachelor Thesis, Leiden University}, 2020.

\bibitem[Von~Luxburg et~al.(2008)Von~Luxburg, Belkin, and Bousquet]{LBB2008}
U.~Von~Luxburg, M.~Belkin, and O.~Bousquet.
\newblock Consistency of spectral clustering.
\newblock \emph{The Annals of Statistics}, pages 555--586, 2008.

\end{thebibliography}

%%%%%%%%%%%%%%%%%%%%%%%%%%%%%%%%%%%%%%%%%%%%%%%%%%

\end{document}